\documentclass[12pt]{amsart}
\usepackage{amssymb,amsthm,amsmath,amstext}
\usepackage{mathrsfs}  
\usepackage{mathtools} 
\usepackage{colonequals}

\usepackage{multirow}
\usepackage{fullpage}

\usepackage[all]{xy}

\usepackage[backref=page]{hyperref}
\hypersetup{pdftitle={Kneser's method of neighbors}}
\hypersetup{pdfauthor={John Voight}}
\hypersetup{colorlinks=true,linkcolor=blue,anchorcolor=blue,citecolor=blue}

\usepackage[nameinlink]{cleveref}

\numberwithin{equation}{section}

\theoremstyle{plain}

\newtheorem{thm}[equation]{Theorem}

\newtheorem{prop}[equation]{Proposition}
\newtheorem{lemma}[equation]{Lemma}

\newtheorem{cor}[equation]{Corollary}

\theoremstyle{definition}
\newtheorem{defn}[equation]{Definition}

\newtheorem{example}[equation]{Example}

\theoremstyle{remark}
\newtheorem{remark}[equation]{Remark}
\newtheorem{rmk}[equation]{Remark}



\newcommand{\Z}{\mathbb Z}

\newcommand{\C}{\mathbb C}
\newcommand{\F}{\mathbb F}
\renewcommand{\P}{\mathbb P}
\newcommand{\Q}{\mathbb Q}
\newcommand{\R}{\mathbb R}

\DeclareMathOperator{\Aut}{\mathrm{Aut}}

\DeclareMathOperator{\disc}{\mathrm{disc}}

\DeclareMathOperator{\GL}{\mathrm{GL}}
\DeclareMathOperator{\M}{\mathrm{M}}

\DeclareMathOperator{\Mat}{\mathrm{Mat}}

\DeclarePairedDelimiter\abs{\lvert}{\rvert}

\newcommand{\frakp}{\mathfrak{p}}

\newcommand{\linedef}[1]{\textsf{#1}}
\newcommand{\defi}[1]{\linedef{#1}}

\DeclareMathOperator{\Orth}{O}
\DeclareMathOperator{\SO}{SO}
\DeclareMathOperator{\Spin}{Spin}
\DeclareMathOperator{\diag}{diag}
\DeclareMathOperator{\Gen}{Gen}
\DeclareMathOperator{\Cl}{Cl}
\DeclareMathOperator{\Map}{Map}
\DeclareMathOperator{\impart}{Im}

\DeclareMathOperator{\tr}{tr}

\newenvironment{enumroman}
{\begin{enumerate}}
{\end{enumerate}}

\newcommand{\tprodprime}[1]{\textstyle{\prod^{\prime}_{#1}}}

\begin{document}

\title{Kneser's method of neighbors}

\author{John Voight}
\address{Department of Mathematics, Dartmouth College, Kemeny Hall, Hanover, NH 03755, USA}
\email{jvoight@gmail.com}

\date{\today}

\begin{abstract}
In a landmark paper published in 1957, Kneser \cite{Kneser} introduced a method for enumerating classes in the genus of a definite, integral quadratic form.  This method has been deeply influential, on account of its theoretical importance as well as its practicality.  In this survey, we exhibit Kneser's method of neighbors and indicate some of its applications in number theory.  
\end{abstract}

\maketitle

\section{Introduction}

The study of integral quadratic forms, a subject with classical roots, remains of enduring interest today.  A fundamental problem concerns the classification of forms up to equivalence under invertible change of variables.  A seminal contribution was made by Martin Kneser \cite{Kneser} in a short but impactful paper published in 1957.  In this paper, Kneser introduced the notion of a \emph{neighbor} of a quadratic form; he then used this notion to solve the classification problem for quadratic forms of small discriminant and moderate rank.  Since then, Kneser's neighboring method has been used in a number of different settings and has found many applications.  

In this survey, we begin in \cref{sec:setup} by exhibiting Kneser's method in its simplest form.  Then in \cref{sec:lattices}, we reinterpret this method in the language of lattices and orthogonal groups and then comment on algorithms and applications in \cref{sec:appl}.  Finally, we explain in \cref{sec:modularforms} how neighbors arise naturally from the consideration of Hecke operators acting on spaces of modular forms.

\subsection*{Acknowledgements}

The author would like to thank Eran Assaf, Wai Kiu (Billy) Chan, Dan Fretwell, Matthew Greenberg, Markus Kirschmer, Gabriele Nebe, and Rainer Schulze-Pillot for their informative perspectives on the topic; further thanks go to Simon Broadhurst, Ga\"etan Chenevier, Mathieu Dutour Sikiri\'{c}, Noam Elkies, Markus Kirschmer, Spencer Secord, and Haochen Wu for corrections and suggestions.  The author was supported by a Simons Collaboration grant (550029). 

\section{Neighbors, a first meeting} \label{sec:setup}

In this section, with a rapid setup we describe neighbors in a simplified situation to get the main idea across.  Let 
\begin{equation} 
Q(x)=Q(x_1,\dots,x_n)=\sum_{1 \leq i \leq j \leq n} a_{ij} x_ix_j \in \Z[x_1,\dots,x_n] 
\end{equation}
be an \defi{integral quadratic form}, a homogeneous polynomial of degree $2$ with $a_{ij} \in \Z$.  Associated to $Q$ is a $\Z$-bilinear form
\begin{equation} \label{eqn:Tzn}
\begin{aligned}
T \colon \Z^n \times \Z^n &\to \Z \\
T(x,y) &= Q(x+y)-Q(x)-Q(y)
\end{aligned} 
\end{equation}
with $Q(x)=\frac{1}{2}T(x,x)$ for all $x \in \Z^n$.  The \defi{Gram matrix} of $Q$ is the symmetric matrix 
\begin{equation}
[T] \colonequals [T(e_i,e_j)]_{i,j}=\begin{pmatrix}
2a_{11} & a_{12} & \dots & a_{1n} \\
a_{12} & 2a_{22} & \dots & a_{2n} \\
\vdots & \vdots & \ddots & \vdots \\
a_{1n} & a_{2n} & \dots & 2a_{nn}
\end{pmatrix} \in \M_n(\Z),
\end{equation}
where $e_1,\dots,e_n$ are the standard basis vectors for $\Z^n$.  Then $x^{\intercal} [T] y = T(x,y)$ for all $x,y \in \Z^n$, where ${}^{\intercal}$ denotes the transpose.  The \defi{(half-)discriminant} of $Q$ is 
\begin{equation} 
\disc Q \colonequals 2^{-\varepsilon(n)} \det [T] \in \Z_{>0} 
\end{equation}
where $\varepsilon(n)=0,1$ according as $n$ is even or odd.  

\begin{remark}
The apparent break in symmetry according to parity of the rank allows us to treat $p=2$ uniformly below (and reflects differences between odd and even rank quadratic forms).
\end{remark}

For the rest of this section, suppose that $Q$ is \defi{positive definite}, i.e., $Q(x) > 0$ for all nonzero $x \in \Z^n$.  Let $\mathcal{Q}_{n,d}$ be the set of positive definite (integral) quadratic forms in $n$ variables with discriminant $d$.  The group $\GL_n(\Z)$ acts by invertible change of variables on $\mathcal{Q}_{n,d}$ via $(gQ)(x)=Q(g^{-1}x)$.  We say $Q,Q' \in \mathcal{Q}_{n,d}$ are \defi{equivalent} if $Q'=gQ$ for some $g \in \GL_n(\Z)$, writing also $Q' \sim Q$.  In terms of Gram matrices, we have $Q \sim Q'$ if and only if there exists $A \in \GL_n(\Z)$ such that $[T']=A^{\intercal} [T] A$.  The set of \defi{automorphisms} (self-equivalences)
\begin{equation}  \label{eqn:AutQ}
\Aut Q \colonequals \{g \in \GL_n(\Z) : gQ = Q\} 
\end{equation}
of $Q$ is a finite group.

For $n=2$, following Gauss we say that $Q(x,y)=ax^2+bxy+cy^2$ is \defi{reduced} if $0 \leq b \leq a \leq c$; from these inequalities, a complete, finite set of representatives for a given discriminant $d=\disc Q=\abs{b^2-4ac}$ can be enumerated.  For general $n \geq 2$, the theory of \emph{Minkowski reduction} similarly provides a polyhedral domain, defined by inequalities on the coefficients $a_{ij}$, which can be used to prove the following theorem. 

\begin{thm} \label{thm:finiteness}
The set of $\GL_n(\Z)$-equivalence classes in $\mathcal{Q}_{n,d}$ is finite.
\end{thm}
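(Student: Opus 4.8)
The plan is to prove the theorem by reduction theory: I will show that every $\GL_n(\Z)$-orbit on $\mathcal{Q}_{n,d}$ contains a representative whose Gram matrix $[T]$ has all entries bounded by a constant depending only on $n$ and $d$. Since such a Gram matrix has integer entries, there are then only finitely many possibilities, and hence only finitely many equivalence classes. Concretely, following the Minkowski reduction hinted at above, I call $Q$ \emph{reduced} if for each $k = 1, \dots, n$ one has $Q(e_k) \leq Q(x)$ for every $x \in \Z^n$ whose trailing coordinates $x_k, x_{k+1}, \dots, x_n$ have greatest common divisor equal to $1$.

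First I would establish that each class contains a reduced representative. Because $Q$ is positive definite and integral, it takes values in $\Z_{>0}$ on nonzero vectors, and each sublevel set $\{x : Q(x) \leq c\}$ is a bounded ellipsoid and so contains only finitely many lattice points; hence the successive minima defining reducedness are attained. Choosing vectors $v_1, \dots, v_n$ that realize them in turn, one checks that the $\gcd$ conditions force $v_1, \dots, v_n$ to be a $\Z$-basis of $\Z^n$, so the associated change of variables lies in $\GL_n(\Z)$ and carries $Q$ to a reduced form.

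Next, from a reduced form I would extract two families of inequalities. Testing the condition with $x = e_{k+1}$ gives $Q(e_1) \leq Q(e_2) \leq \dots \leq Q(e_n)$, while testing with the vectors $x = e_i \pm e_j$ for $i < j$ yields $\abs{T(e_i,e_j)} \leq Q(e_i)$, that is, $\abs{a_{ij}} \leq a_{ii} = Q(e_i)$. Thus every off-diagonal entry is controlled by a diagonal entry, and it remains only to bound the diagonal entries $Q(e_k)$.

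The key point, and the main obstacle, is the inequality
\[
\prod_{k=1}^n Q(e_k) \leq c_n \det[T]
\]
for a constant $c_n$ depending only on $n$; this is exactly where the geometry of numbers enters, via Minkowski's convex body theorem (bounding the minimum $\min_{x \neq 0} Q(x)$ by a constant times $(\det[T])^{1/n}$) together with its refinement to successive minima. Granting this, since each $Q(e_k) \geq 1$, the full product dominates each individual factor, so
\[
Q(e_k) \leq \prod_{j=1}^n Q(e_j) \leq c_n \det[T] = c_n 2^{\varepsilon(n)} d
\]
for every $k$. Hence all diagonal entries $a_{kk} = Q(e_k)$, and therefore by the previous step all entries $a_{ij}$, are bounded in terms of $n$ and $d$ alone. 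Finitely many integer Gram matrices meet these bounds, so finitely many reduced forms occur, and the theorem follows. I expect the delicate points to be the verification that the minimizing vectors genuinely constitute a $\Z$-basis and the quantitative comparison of $\prod_k Q(e_k)$ with the discriminant.
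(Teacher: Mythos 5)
Your proposal is correct and follows essentially the same route as the paper, whose ``proof'' is simply a citation to Cassels \cite[Chapter 12, Theorem 1.1]{Cas}: there finiteness is established by exactly the Minkowski-reduction argument you outline (a reduced representative in each class, the inequalities $Q(e_1)\leq \dots \leq Q(e_n)$ and $\abs{a_{ij}}\leq a_{ii}$, and the bound $\prod_{k=1}^n Q(e_k)\leq c_n \det[T]$). You also correctly isolate the one genuinely nontrivial input---the comparison of $\prod_k Q(e_k)$ with $\det[T]=2^{\varepsilon(n)}d$ via Minkowski's theorem on successive minima---which is precisely the classical ingredient supplied by the cited reference.
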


\begin{proof}
See Cassels \cite[Chapter 12, Theorem 1.1]{Cas}; for further detailed references, see 
Conway--Sloane \cite[Chapter 15, \S 10]{CS}.
\end{proof}

One is left with the fundamental problem of enumerating representatives for the classes in $\mathcal{Q}_{n,d}$ and to compute its size $h_{n,d} \colonequals \#(\GL_n(\Z) \backslash \mathcal{Q}_{n,d})$.  This is where Kneser begins his article \cite{Kneser}.  The class number $h_{n,d}$ grows rapidly with $n$ and $d$, so we are often restricted to small values.  Unfortunately, using Minkowski reduction quickly becomes impractical already for $n \geq 6$, due to the complicated description of the domain and its boundary.  On the other hand, the Smith--Siegel--Minkowski mass formula \cite[section 4]{CSIV} provides an explicit expression for a \emph{weighted} class number
\begin{equation} \label{eqn:yuyub}
m_{n,d} \colonequals \sum_{[Q] \in \mathcal{Q}_{n,d}/\sim} \frac{1}{\#\Aut Q};
\end{equation}
for example, if $d=1$ and $8 \mid n$ then
\[ m_{n,d} = \frac{\abs{B_{n/2}}}{n} \prod_{k=1}^{(n/2)-1} \frac{\abs{B_{2k}}}{4k} \]
where $B_k$ are the Bernoulli numbers defined by $x/(e^x-1)=\sum_{k=0}^{\infty} B_k x^k/k!$.
Therefore, if one has a list $Q_1,\dots,Q_h$ of inequivalent forms in $\mathcal{Q}_{n,d}$, then one can certify the list is complete by showing that $m_{n,d}=\sum_i 1/(\#\Aut Q_i)$.  However, the mass formula does not itself provide a set of representatives, and it is not clear in general how to compute $h_{n,d}$ from $m_{n,d}$.

Noting these two strategies and their limitations, Kneser \cite{Kneser} proposed a different method, starting with a quadratic form $Q \in \mathcal{Q}_{n,d}$.  Intuitively, we will look for an \emph{adjacent} form $Q' \in \mathcal{Q}_{n,d}$ obtained from $Q$ by a change of variables with denominator $2$.  We illustrate the method with an example.

\begin{example} \label{exm:311}
We begin with the quadratic form
\[ Q(x,y,z) = x^2+y^2+yz+3z^2  \]
with Gram matrix
\[ [T] = \begin{pmatrix} 2 & 0 & 0 \\ 0 & 2 & 1 \\ 0 & 1 & 6 \end{pmatrix} \]
and $\disc Q = 11$, so $Q \in \mathcal{Q}_{3,11}$.  We look for a vector we can divide by $2$ (retaining integral values) and we find $Q(1,0,1)=4$.  With a change of variables, we can make this the first basis vector, giving the equivalent form 
\begin{equation}  \label{eqn:46yz}
4x^2 + xy + 6xz + y^2 + yz + 3z^2. 
\end{equation}
Substituting $(\tfrac{1}{2} x, 2y, z)$ in \eqref{eqn:46yz} gives
\begin{equation} 
x^2 + xy + 3xz + 4y^2 + 2yz + 3z^2 \sim Q'(x,y,z) \colonequals x^2+xy+xz+y^2+yz+4z^2. 
\end{equation}
We have $\disc Q' = 11$, so $Q' \in \mathcal{Q}_{3,11}$.  We claim that $Q' \not\sim Q$: 
indeed $Q(x,y,z)=1$ has only four solutions but $Q'(x,y,z)=1$ has six.  

The mass formula for $m_{3,p}$ for prime $p$ is $(p-1)/48$.  For $p=11$ we get $m_{3,11}=5/24$.  We compute that 
\begin{equation}
\Aut(Q)=\langle -1, \left(\begin{smallmatrix} 1 & 0 & 0 \\ 0 & 1 & 0 \\ 0 & 1 & -1 \end{smallmatrix}\right), \left(\begin{smallmatrix} -1 & 0 & 0 \\ 0 & 1 & 0 \\ 0 & 0 & 1 \end{smallmatrix}\right) \rangle \simeq (\Z/2\Z)^2
\end{equation}
so $\#\Aut(Q)=8$, and similarly $\#\Aut(Q')=12$.  Since $5/24=1/8+1/12$, we conclude that there are exactly $2$ ternary forms of discriminant $11$ up to equivalence, represented by $Q$ and $Q'$.
\end{example}

Via this method, Kneser \cite[Satz 3]{Kneser} computes representatives for quadratic forms in $n \leq 16$ variables and small discriminant $d$ with just a few pages of calculation.  In particular, with clever but short arguments, he quickly recovers two statements:
\begin{itemize}
\item a theorem of Mordell \cite{Mordell} that every form in $\mathcal{Q}_{8,1}$ (in $8$ variables with discriminant $1$) is equivalent to 
\begin{equation} \label{eqn:E8}
\left(\sum_{1 \leq i \leq j \leq 8} x_ix_j\right) - x_1x_2 - x_2x_3,
\end{equation}
and 
\item a theorem of Witt \cite[Satz 3]{Witt} that there are exactly two classes in $\mathcal{Q}_{16,1}$.
\end{itemize}

\begin{remark}
The two classes in $\mathcal{Q}_{16,1}$ were famously used by Milnor \cite{Milnor} to exhibit isospectral, nonisometric flat tori.
\end{remark}

\section{Neighboring lattices} \label{sec:lattices}
  
With the basic idea explained in the previous section, we now shift our perspective slightly to give a more general treatment: thinking geometrically, we consider lattices in a quadratic space.  As general references, see Serre \cite[Chapters IV--V]{Serre}, O'Meara \cite{OM}, Conway--Sloane \cite[Chapter 15]{CS}, and Chenevier--Lannes \cite[Chapters 2--3]{CL}.

\subsection*{Notation}

Let $V$ be a $\Q$-vector space with $n \colonequals \dim_\Q V$.  Let $Q \colon V \to \Q$ be a nondegenerate quadratic form with associated bilinear form $T \colon V \times V \to \Q$ (as in \eqref{eqn:Tzn}).  We say $Q$ is \defi{isotropic} if $Q(v)=0$ for some $v \neq 0$.  

A \defi{(full) lattice} $\Lambda \subset V$ is the $\Z$-span of a $\Q$-basis for $V$.  A lattice $\Lambda$ is \defi{integral} if $Q(\Lambda) \subseteq \Z$; this may always be obtained by rescaling.

\begin{rmk}
Since $T(x,x)=2Q(x)$ for all $x \in \Lambda$, we work implicitly throughout with what are called \defi{even} lattices.  The definitions extend as well to \defi{odd} lattices, those for which $T(\Lambda,\Lambda) \subseteq \Z$ but $Q(\Lambda) \not\subseteq \Z$, at the expense of additional technical conditions for $p=2$.
\end{rmk}

Let $\Lambda \subset V$ be an integral lattice and choose a basis $\Lambda=\Z e_1 + \dots + \Z e_n$.  Then we obtain an integral quadratic form
\[ Q_\Lambda(x) = Q(x_1e_1 + \dots + x_ne_n) \in \Z[x_1,\dots,x_n]   \]
as in the previous section.  Conversely, given a quadratic form $Q$ on $\Z^n$, extending scalars we have the integral lattice $\Lambda = \Z^n \subset V \colonequals \Q^n$.  As before, we define the \defi{discriminant}
\begin{equation} 
\disc \Lambda \colonequals 2^{-\varepsilon(n)} \det(T(e_i,e_j))_{i,j} \in \Z_{>0} 
\end{equation}
(well-defined, independent of the choice of basis).  We note that for a sublattice $\Lambda' \subseteq \Lambda$, 
\begin{equation} \label{eqn:dlambda}
\disc \Lambda' = [\Lambda:\Lambda']^2 \disc \Lambda.
\end{equation}
We will abbreviate $d \colonequals \disc \Lambda$.

\begin{example} \label{exm:unimodular}
Lattice of discriminant $1$ are called \defi{unimodular} or \defi{self-dual}.  (For further reading, see Chenevier--Lannes \cite[Chapter 2]{CL} or Conway--Sloane \cite{CS4}.)  There exists a unimodular lattice in even dimension $n$ if and only if $n \equiv 0 \pmod{8}$.  Associated to a unimodular lattice is its root system $\{x \in \Lambda : Q(x)=1\}$ of rank at most $n$; its irreducible components are of ADE type ($\mathbf{A}_n$, $\mathbf{D}_n$, $\mathbf{E}_6$, $\mathbf{E}_7$, or $\mathbf{E}_8$).

For example, for $n \geq 3$ we consider the quadratic space $\Q^n$ with quadratic form $Q(x)=\tfrac{1}{2}\sum_{i=1}^{n} x_i^2$.  Inside we have the integral lattice $D_n \colonequals \{x \in \Z^n : \sum_{i=1}^n x_i \equiv 0 \pmod{2}\}$ with root lattice of type $\mathbf{D}_n$.  For $8 \mid n$, the lattice $D_n^+ \colonequals D_n + \Z v$ where $v = \tfrac{1}{2}(1,\dots,1)$ is a unimodular lattice.  The root lattice of $D_8^+$ is of type $\mathbf{E}_8$ (as is the $\Z$-span $E_8$ of $\mathbf{E}_8$, giving the exceptional identity $E_8=D_8^+$), and its associated integral quadratic form is \eqref{eqn:E8}.  The root lattice of $D_{16}^+$ is of type $\mathbf{D}_{16}$ and the two classes in $\mathcal{Q}_{16,1}$ mentioned above are represented by $D_{16}^+$ and $E_8 \boxplus E_8$.  
\end{example}

We define the \defi{orthogonal group} of $V$
\begin{equation}
\Orth(V) \colonequals \{g \in \GL(V) \colon Q(gx) = Q(x) \text{ for all $x \in V$}\}
\end{equation}
as the group of invertible linear maps which preserve the quadratic form $Q$.  We call the elements of $\Orth(V)$ \defi{isometries} of $V$.  We also consider the subgroup of isometries that preserve $\Lambda$, namely
\begin{equation}
\Orth(\Lambda) \colonequals \{g \in \Orth(V) : g \Lambda = \Lambda\}.
\end{equation}
When $Q$ is positive definite, we have $\Orth(\Lambda) \simeq \Aut Q_{\Lambda}$, as in \eqref{eqn:AutQ}; in particular, $\#\Orth(\Lambda) < \infty$.

We say that lattices $\Lambda,\Pi \subset V$ are \defi{isometric}, written $\Lambda \simeq \Pi$, if there exists $g \in \Orth(V)$ such that $g \Lambda = \Pi$.  Choosing bases for $\Lambda,\Pi$, we see that $\Lambda \simeq \Pi$ if and only if $Q_\Lambda \sim Q_{\Pi}$.

We seek to classify isometry classes of lattices, and a first step is given by considering their invariants locally.  More precisely, for a prime $p$, let $\Q_p$ be the field of $p$-adic numbers and $\Z_p \subset \Q_p$ the ring of $p$-adic integers.  We repeat the above definitions, replacing $\Z,\Q$ with $\Z_p,\Q_p$.  To $\Lambda \subset V$, we associate $\Lambda_p \colonequals \Lambda \otimes_{\Z} \Z_p \subseteq V_p \colonequals V \otimes_{\Q} \Q_p$.  Detecting local isometry is much easier than global isometry; for example, we have the following lemma.  

\begin{lemma} \label{lem:LambdapV}
If $\Lambda' \subset V$ is an integral lattice with $d=\disc \Lambda=\disc \Lambda'$, then $\Lambda_p' \simeq \Lambda_p$ for all $p \nmid d$.  
\end{lemma}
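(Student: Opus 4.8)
The statement is already local at $p$, so the plan is simply to exhibit, for each $p \nmid d$, an isometry of $V_p$ carrying $\Lambda_p$ onto $\Lambda'_p$. Concretely, if I choose $\Z_p$-bases and let $G,G' \in \M_n(\Z_p)$ be the resulting Gram matrices of $T$ on $\Lambda_p$ and $\Lambda'_p$, then $\Lambda_p \simeq \Lambda'_p$ amounts to finding $A \in \GL_n(\Z_p)$ with $A^{\intercal} G A = G'$. The whole proof will run through the local classification of $\Z_p$-lattices: I claim that for $p \nmid d$ the isometry class of $\Lambda_p$ depends only on the rank $n$ and on the square class of the discriminant, so that the coincidence of these invariants for $\Lambda$ and $\Lambda'$ forces $\Lambda_p \simeq \Lambda'_p$. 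Note that, since a change of $\Z_p$-basis multiplies $\det G$ by a square of a unit, both $\det G$ and $\det G'$ lie in the single class $2^{\varepsilon(n)} d\,(\Z_p^\times)^2$.

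First I would treat the odd primes, which is the transparent case. For $p \nmid 2d$ we have $2,d \in \Z_p^\times$, hence $\det G \in \Z_p^\times$, so $\Lambda_p$ is unimodular over $\Z_p$. By Hensel's lemma a unimodular symmetric form over $\Z_p$ diagonalizes, and one has $\langle u_1,\dots,u_n\rangle \simeq \langle 1,\dots,1,u_1\cdots u_n\rangle$ for units $u_i$; thus a unimodular $\Z_p$-lattice is determined up to isometry by its rank and the class of its determinant in $\Z_p^\times/(\Z_p^\times)^2$ (see \cite{OM} or \cite{Cas}). Since $G$ and $G'$ share the rank $n$ and the determinant class $2^{\varepsilon(n)} d\,(\Z_p^\times)^2$, they are equivalent, and $\Lambda_p \simeq \Lambda'_p$.

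The remaining case $p = 2$ arises precisely when $d$ is odd, and it is here that the normalization $\disc \Lambda = 2^{-\varepsilon(n)}\det[T]$ and the even-lattice convention do their work. The Jordan decomposition of $\Lambda_2$ is unimodular when $n$ is even, and of the shape (even unimodular of rank $n-1$) $\perp \langle 2u\rangle$ with $u \in \Z_2^\times$ when $n$ is odd; in either case the hypothesis that the half-discriminant $d$ is a $2$-adic unit constrains the Jordan invariants so that they are determined by $n$ and the class of $d$ modulo squares. I would then cite the classification of quadratic $\Z_2$-lattices (\cite[\S 93]{OM}, \cite[Chapter 15]{CS}) to conclude once more that $(n,d)$ is a complete invariant, whence $\Lambda_2 \simeq \Lambda'_2$. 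This dyadic analysis is the main obstacle: unlike the odd case, a Jordan splitting over $\Z_2$ is not in general a complete isometry invariant, and the real content is to verify that the combination of $2 \nmid d$ with evenness removes the usual ambiguity — exactly the uniform treatment of $p=2$ flagged in the remark above.
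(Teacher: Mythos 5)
Your proposal is correct and follows essentially the same route as the paper, whose entire proof is a citation to Conway--Sloane [Chapter~15, \S 7] for the local classification of $\Z_p$-lattices: you simply unpack that classification, diagonalizing unimodular lattices at odd $p$ and invoking the Jordan/genus-symbol theory (where evenness plus $2 \nmid d$ does indeed pin down the dyadic class, e.g.\ by sign walking along the train joining the scale-$1$ and scale-$2$ constituents when $n$ is odd). Your dyadic analysis is accurate, including the observation that for odd $n$ the splitting is (even unimodular of rank $n-1$) $\boxplus \langle 2u \rangle$ with everything determined by $n$ and $d$ modulo squares.
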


\begin{proof}
See Conway--Sloane \cite[Chapter 15, section 7]{CS}.
\end{proof}

We define the \defi{genus} of $\Lambda$ to be
\begin{equation} 
\Gen \Lambda \colonequals \{ \Lambda' \subset V : \Lambda_p \simeq \Lambda'_p \text{ for all $p$}\}. 
\end{equation}
(There is no condition over $\R$ since the lattices are taken in $V$, which has a fixed signature.)  For all $\Lambda' \in \Gen \Lambda$ we have $\disc \Lambda' = \disc \Lambda$, so the set $\mathcal{Q}_{n,d}$ is a disjoint union of (finitely many) genera.  The \defi{class set} 
\begin{equation}
\Cl \Lambda \colonequals (\Gen \Lambda)/\!\simeq
\end{equation} 
is the set of (global) isometry classes in the genus $\Gen \Lambda$.  The genus of $\Lambda$ consists of all lattices in $V$ which are (everywhere) locally isometric to $\Lambda$.  Therefore, the class set measures the failure of the corresponding local--global principle: namely, when local isometry classes determine the global isometry class.  Again by reduction theory (\Cref{thm:finiteness})---extended to the indefinite case---the class set is finite: $\#\Cl \Lambda <\infty$.  

\subsection*{Neighbors}

Kneser's theory of $p$-neighbors \cite{Kneser} gives an effective method to enumerate representatives of the isometry classes in $\Cl \Lambda$.  Let $p \nmid d=\disc(\Lambda)$ be prime.

\begin{defn}
A lattice $\Pi \subset V$ is a \defi{$p$-neighbor} of $\Lambda$, written $\Pi \sim_p \Lambda$, if $\Pi$ is integral and
\[ [\Lambda:\Lambda \cap \Pi]=[\Pi:\Lambda \cap \Pi]=p. \]
\end{defn}

The $p$-neighbor relation is symmetric, but not reflexive or transitive; we should think of two $p$-neighbors as being \emph{adjacent}, differing only by a minimal index $p$.  (See below for a further combinatorial interpretation.)  We might visualize the $p$-neighbor relation and lattice containments like this:
\begin{equation}
\begin{aligned}
\xymatrix{
\Lambda \ar@{-}[dr]^{p} & & \Pi \ar@{-}[dl]_{p} \\
& \Lambda \cap \Pi \\
p\Lambda \ar@{-}[ur] & & p\Pi \ar@{-}[ul]}
\end{aligned}
\end{equation}

\begin{lemma}
If $\Lambda \sim_p \Pi$, then $\disc \Pi=\disc \Lambda$ and $\Pi \in \Gen \Lambda$.
\end{lemma}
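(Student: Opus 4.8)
The plan is to treat the two assertions separately: the equality of discriminants follows from a one-line index computation, while the genus membership is checked prime by prime, with only the prime $p$ itself requiring any real content.

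First I would set $N \colonequals \Lambda \cap \Pi$, so that by the definition of a $p$-neighbor $N$ sits with index $p$ in each of $\Lambda$ and $\Pi$. Applying \eqref{eqn:dlambda} to the two inclusions $N \subseteq \Lambda$ and $N \subseteq \Pi$ gives $\disc N = [\Lambda:N]^2 \disc \Lambda = p^2 \disc \Lambda$ and likewise $\disc N = p^2 \disc \Pi$; equating these and cancelling $p^2$ yields $\disc \Pi = \disc \Lambda = d$. This disposes of the first claim and is the easy half.

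For the genus, I must show $\Lambda_q \simeq \Pi_q$ for every prime $q$. For $q \neq p$ I would localize the short exact sequences $0 \to N \to \Lambda \to \Lambda/N \to 0$ and $0 \to N \to \Pi \to \Pi/N \to 0$, in which the quotients are cyclic of order $p$; since $\Z_q$ is flat over $\Z$ and $(\Z/p\Z)\otimes_{\Z}\Z_q = 0$ because $p \in \Z_q^{\times}$, the inclusions become isomorphisms $N_q \xrightarrow{\sim} \Lambda_q$ and $N_q \xrightarrow{\sim} \Pi_q$, so that $\Lambda_q = \Pi_q$ as sublattices of $V_q$ and in particular $\Lambda_q \simeq \Pi_q$. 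For $q = p$ this equality fails, but here I invoke the standing hypothesis $p \nmid d$ together with the fact, just established, that $\Pi$ is an integral lattice in $V$ with $\disc \Pi = \disc \Lambda = d$; then \Cref{lem:LambdapV} applies verbatim and delivers $\Lambda_p \simeq \Pi_p$. In fact \Cref{lem:LambdapV} covers all $q \nmid d$ simultaneously, so only the primes $q \mid d$, which are automatically distinct from $p$, require the elementary localization argument above. Combining the two cases verifies local isometry at every prime, whence $\Pi \in \Gen \Lambda$.

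I expect the only substantive step to be the isometry $\Lambda_p \simeq \Pi_p$ at the neighboring prime itself, where the two lattices genuinely differ (everywhere else they localize to literally the same lattice). This is precisely the statement of \Cref{lem:LambdapV}: over $\Z_p$ with $p \nmid d$, both localizations are unimodular lattices in the common quadratic space $V_p$ of equal discriminant, and such lattices are isometric. Since that classification is already available, the remainder of the proof is bookkeeping with indices and flat base change, and no new difficulty arises.
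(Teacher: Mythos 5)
Your proposal is correct and follows essentially the same route as the paper: the discriminant equality via \eqref{eqn:dlambda} applied to $\Lambda \cap \Pi$, equality of localizations $\Lambda_\ell = \Pi_\ell$ away from $p$, and \Cref{lem:LambdapV} (using $p \nmid d$) to handle the prime $p$ itself. The only difference is that you spell out the flat-base-change justification for $\Lambda_q = \Pi_q$ when $q \neq p$, which the paper leaves implicit.
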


\begin{proof}
We have $\disc \Pi=(1/p)^2 \disc (\Lambda \cap \Pi) = \disc \Lambda$, using \eqref{eqn:dlambda}.  Moreover, $\Pi_\ell=\Lambda_\ell$ for all primes $\ell \neq p$, and $\Pi_p \simeq \Lambda_p$ by \Cref{lem:LambdapV} (since $p \nmid d$), so $\Pi \in \Gen \Lambda$.
\end{proof}

Happily, $p$-neighbors admit several further explicit characterizations, as follows.  Let $H\colon \Z^2 \to \Z$ denote the \defi{hyperbolic plane}, defined by $H(x,y)=xy$.  We write $\Lambda \boxplus \Lambda'$ for the orthogonal direct sum of the lattices $\Lambda,\Lambda'$ (inside the orthogonal direct sum $V \boxplus V'$).

\begin{prop} \label{laticpr}
Let $\Pi \subset V$ be a lattice.  Then the following are equivalent.
\begin{enumroman}
\item $\Pi \sim_p \Lambda$, i.e., $\Pi$ is a $p$-neighbor of $\Lambda$.
\item $\Lambda_\ell=\Pi_\ell$ for all primes $\ell \neq p$, and there exists a splitting 
\[ \Lambda_p = (\Z_p e_1 \oplus \Z_p e_2) \boxplus \Lambda_p'   = H_p \boxplus \Lambda_p'   \]
such that
\[ \Pi_p = \Z_p \Bigl({\frac{1}{p}}e_1\Bigr) \oplus \Z_p(pe_2) \boxplus \Lambda_p'.   \]
\item There exists $v \in \Lambda \smallsetminus p\Lambda$ such that $p^2 \mid Q(v)$ and
\[ \Pi = \Lambda(p,v) \colonequals (p^{-1}v)\Z + \{w \in \Lambda : T(v,w) \in p\Z\}.   \]
\end{enumroman}
\end{prop}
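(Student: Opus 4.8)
The plan is to prove the proposition by establishing the cycle of implications $(iii) \Rightarrow (ii) \Rightarrow (i) \Rightarrow (iii)$, exploiting the fact that statement $(ii)$ is entirely local at $p$ (since it fixes $\Lambda_\ell = \Pi_\ell$ away from $p$) while $(iii)$ is given by an explicit global formula. The unifying principle throughout is the \emph{local--global dictionary for lattices}: a lattice $\Pi \subset V$ is determined by its localizations $\Pi_\ell$ for all $\ell$, and two lattices agreeing at all but finitely many primes can be compared prime-by-prime. This lets me reduce every claim to a statement over $\Z_p$, where the structure theory of quadratic forms gives us the splitting in $(ii)$.

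\emph{For $(iii) \Rightarrow (ii)$}, I start with $v \in \Lambda \smallsetminus p\Lambda$ satisfying $p^2 \mid Q(v)$ and form $\Pi = \Lambda(p,v)$. First I check that $\Pi_\ell = \Lambda_\ell$ for $\ell \neq p$: the condition $T(v,w) \in p\Z$ is vacuous after tensoring with $\Z_\ell$ (as $p \in \Z_\ell^{\times}$), and $p^{-1}v \in \Lambda_\ell$ already, so $\Pi_\ell = \Lambda_\ell$. The real content is at $p$. Writing $\ol{v}$ for the image of $v$ in $\Lambda/p\Lambda$, since $v \notin p\Lambda$ there is some $w_0 \in \Lambda$ with $T(v, w_0) \in \Z_p^{\times}$ (nondegeneracy of the reduced form mod $p$, using $p \nmid d$). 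I would then scale $w_0$ so that $T(v, w_0) \equiv 1$ and use $e_1 \colonequals v$ together with a suitable $e_2$ built from $w_0$ to produce a hyperbolic plane $H_p = \Z_p e_1 \oplus \Z_p e_2$ inside $\Lambda_p$; the orthogonal complement gives $\Lambda_p'$. The condition $p^2 \mid Q(v)$ is exactly what is needed to arrange $Q(e_1) \in p^2\Z_p$ so that, after adjusting $e_2$ by a multiple of $e_1$, the plane is genuinely hyperbolic (i.e.\ $Q(e_1) = Q(e_2) = 0$) — this normalization is the one delicate computation. Unwinding the definition of $\Lambda(p,v)$ in this basis then yields precisely $\Pi_p = \Z_p(\tfrac{1}{p}e_1) \oplus \Z_p(pe_2) \boxplus \Lambda_p'$.

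\emph{For $(ii) \Rightarrow (i)$}, the discriminant and index computations are local and routine: in the given splitting one reads off $[\Lambda_p : (\Lambda \cap \Pi)_p] = [\Pi_p : (\Lambda \cap \Pi)_p] = p$ directly from the lattice diagram $\Z_p(\tfrac{1}{p}e_1) \oplus \Z_p(pe_2)$ versus $\Z_p e_1 \oplus \Z_p e_2$, whose intersection is $\Z_p e_1 \oplus \Z_p(pe_2)$, and integrality of $\Pi$ follows since $Q(\tfrac{1}{p}e_1) = p^{-2}Q(e_1) \in \Z_p$ by the hyperbolic normalization. Because the indices are local invariants and $\Lambda_\ell = \Pi_\ell$ away from $p$, these pass to the global statement $\Pi \sim_p \Lambda$.

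\emph{For $(i) \Rightarrow (iii)$}, I begin with the abstract neighbor condition $[\Lambda : \Lambda \cap \Pi] = [\Pi : \Lambda \cap \Pi] = p$ and must \emph{produce} the vector $v$. The idea is that $\Pi \cap \Lambda$ has index $p$ in both, so $\Lambda / (\Lambda \cap \Pi) \cong \Z/p\Z$, and I take $v$ to be a generator of the rank-one module $p\Pi \cap \Lambda$ lifted appropriately — concretely, $\Pi + \Lambda$ contains a vector of the form $p^{-1}v$ with $v \in \Lambda$, and the integrality of $\Pi$ forces $p^2 \mid Q(v)$ and the divisibility condition on $T(v, \cdot)$ that reconstitutes $\Lambda(p,v)$. \textbf{The main obstacle} I anticipate is precisely this last implication: extracting the explicit generator $v$ and verifying that $\Pi$ coincides exactly with $\Lambda(p,v)$ — not merely has the same index — requires carefully tracking how the index-$p$ conditions on both sides interact, and showing that the candidate $v$ indeed lies in $\Lambda \smallsetminus p\Lambda$ with $p^2 \mid Q(v)$. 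Here the cleanest route may be to first pass to $(ii)$ via the local structure theory (any two lattices that are $p$-neighbors have localizations related by a hyperbolic rescaling), and then read $v = e_1$ off the splitting, thereby closing the cycle without a direct combinatorial argument.
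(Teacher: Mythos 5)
The paper itself offers no argument for this proposition---it defers to Schulze-Pillot, Scharlau--Hemkemeier, and Greenberg--Voight---so your proposal must stand on its own; its overall architecture (localize everything, Hensel at $p$, a cycle of implications) is indeed the standard one in those references, and your (ii)$\Rightarrow$(i) step is correct and complete. But there is a genuine gap in the one computation you yourself flag as delicate in (iii)$\Rightarrow$(ii): you propose to take $e_1 \colonequals v$ and make the plane ``genuinely hyperbolic'' by ``adjusting $e_2$ by a multiple of $e_1$.'' Adjusting $e_2$ by multiples of $e_1$ changes only $Q(e_2)$; it can never repair $Q(e_1)=Q(v)$, which is merely divisible by $p^2$, not zero, while statement (ii) demands an exactly hyperbolic basis. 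You must correct $e_1$ itself: with $T(v,w_0)=1$, apply Hensel to $f(t)=pQ(w_0)t^2+t+Q(v)/p$ (here $f(0)\equiv 0$ and $f'(0)\equiv 1 \pmod p$, valid also at $p=2$) to get $t \in p\Z_p$ with $e_1 \colonequals v+ptw_0$ isotropic; only afterwards kill $Q(e_2)$ by your adjustment. Moreover you must then verify that replacing $v$ by $e_1$ does not change the neighbor, i.e.\ $\Lambda(p,e_1)_p=\Lambda(p,v)_p$: this holds because the correction satisfies $T(v,tw_0)=t\in p\Z_p$, so it lies in $p\Lambda_v$ where $\Lambda_v \colonequals \{w \in \Lambda_p : T(v,w)\in p\Z_p\}$---without this check the lattice you compute in the hyperbolic basis is the neighbor of the wrong vector. (The splitting itself is fine: the binary sublattice $\Z_p v+\Z_p w_0$ has Gram determinant $4Q(v)Q(w_0)-1 \in \Z_p^\times$, hence splits off orthogonally.)

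The second gap is (i)$\Rightarrow$(iii), which you leave essentially unproved, and both of your fallbacks fail. First, $p\Pi \cap \Lambda$ is not a rank-one module: since $\Pi/(\Lambda\cap\Pi)\simeq \Z/p\Z$ gives $p\Pi \subseteq \Lambda\cap\Pi \subseteq \Lambda$, it equals $p\Pi$, of full rank $n$. Second, the proposed detour---``pass to (ii) via the local structure theory (any two lattices that are $p$-neighbors have localizations related by a hyperbolic rescaling)''---is circular: that parenthetical assertion \emph{is} the implication (i)$\Rightarrow$(ii), precisely what needs proof. The direct argument you were circling is short and you should carry it out: pick $u \in \Pi \smallsetminus \Lambda$ and set $v \colonequals pu \in \Lambda \smallsetminus p\Lambda$ (if $v=pw$ with $w\in\Lambda$ then $u\in\Lambda$, a contradiction); integrality of $\Pi$ gives $Q(v)=p^2Q(u)\in p^2\Z$ and $T(v,w)=pT(u,w)\in p\Z$ for all $w \in \Lambda\cap\Pi$, so $\Lambda\cap\Pi \subseteq \{w\in\Lambda : T(v,w)\in p\Z\}$; since $p\nmid d$ the functional $w \mapsto T(v,w) \bmod p$ is nonzero, so this last sublattice has index exactly $p$ in $\Lambda$, forcing it to equal $\Lambda\cap\Pi$; finally $\Lambda(p,v)=\Z(p^{-1}v)+(\Lambda\cap\Pi) \subseteq \Pi$, and both contain $\Lambda\cap\Pi$ with index $p$, whence $\Lambda(p,v)=\Pi$. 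Note this uses the hypothesis $p \nmid d$ in an essential way, which your sketch never invokes for this implication.
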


\begin{proof}
See Schulze-Pillot \cite[\S 1]{SP}, or more generally Scharlau--Hemkemeier \cite[\S 2]{SchHem} or Green\-berg--Voight \cite[\S 5]{GV}.
\end{proof}

\begin{remark}
Statement (ii) can be rephrased as saying that the \emph{invariant factors} of $\Pi$ with respect to $\Lambda$ are $1/p,p,1,\dots,1$.  Another interpretation may be given in terms of \emph{transverse Lagrangians}.  In particular, we recover the notion of neighbors from the previous section, where we took $p=2$.
\end{remark}

\begin{cor} \label{cor:pneigh}
Let $V(Q)$ denote the quadric defined by $Q(x)=0$ in $\P^{n-1}(F_p)$.  Then the set of $p$-neighbors is in (well-defined) bijection with the set of $\F_p$-points $V(Q)(\F_p) \subseteq \P^{n-1}(\F_p)$ via
\[ \Pi=\Lambda(p,v) \leftrightarrow [v]. \]
\end{cor}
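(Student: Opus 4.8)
The plan is to produce the bijection by exhibiting explicit maps in both directions and checking they are mutually inverse. In one direction, \Cref{laticpr}(iii) already tells us that every $p$-neighbor has the shape $\Pi = \Lambda(p,v)$ for some $v \in \Lambda \smallsetminus p\Lambda$ with $p^2 \mid Q(v)$; reducing $v$ modulo $p$ gives a point $[v] \in \P^{n-1}(\F_p)$, and since $p^2 \mid Q(v)$ forces $Q(v) \equiv 0 \pmod p$, this point lies on the quadric $V(Q)$. So the assignment $\Pi \mapsto [v]$ at least has the right target; the content is that $[v]$ is well defined (independent of the chosen $v$), that the assignment is injective, and that every point of $V(Q)(\F_p)$ arises. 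The object that controls all of this is the intersection sublattice
\[ L_0 \colonequals \Lambda \cap \Pi = \{w \in \Lambda : T(v,w) \in p\Z\}, \]
which I would first identify directly from the description of $\Lambda(p,v)$ (using $v \in L_0$, which holds because $T(v,v) = 2Q(v) \in p\Z$).

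The key input is the hypothesis $p \nmid d$. Reducing the Gram matrix modulo $p$, it makes the bilinear form $\ol{T}$ on $\Lambda/p\Lambda \cong \F_p^n$ nondegenerate (equivalently, it makes the quadric $V(Q) \subset \P^{n-1}_{\F_p}$ smooth); this is exactly where $p \nmid d$ enters, and it is the same condition used in \Cref{lem:LambdapV}. Two consequences drive the proof. First, for $\ol v \neq 0$ the linear functional $\ol w \mapsto \ol T(\ol v, \ol w)$ on $\F_p^n$ is nonzero, so its kernel $L_0/p\Lambda$ is a hyperplane and $[\Lambda : L_0] = p$, consistent with the neighbor relation. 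Second, nondegeneracy makes the assignment sending a line to its $\ol T$-orthogonal hyperplane a bijection between lines and hyperplanes in $\F_p^n$; hence the hyperplane $L_0/p\Lambda$ recovers the line $\F_p \ol v = [v]$. Since $L_0 = \Lambda \cap \Pi$ is intrinsic to $\Pi$, this simultaneously shows that $[v]$ depends only on $\Pi$ and that $\Pi \mapsto [v]$ is injective.

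It remains to show the map is onto, i.e.\ to build a neighbor from an arbitrary $[v] \in V(Q)(\F_p)$, and this is where the main obstacle lies. A point on the quadric only guarantees $Q(v) \equiv 0 \pmod p$, whereas the construction $\Lambda(p,v)$ of \Cref{laticpr}(iii) requires the stronger congruence $p^2 \mid Q(v)$. I would close this gap by refining the lift: replacing $v$ by $v + pw$ changes $Q(v)$ by $p\,T(v,w) \pmod{p^2}$, and since the functional $w \mapsto T(v,w) \bmod p$ is surjective (again by nondegeneracy at the smooth point $\ol v$), one can choose $w$ so that $p^2 \mid Q(v+pw)$, producing a genuine $p$-neighbor. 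Finally I would verify that the neighbor is insensitive to the remaining ambiguity in the lift, namely scaling $v$ by a unit of $\Z/p\Z$ and translating by $pL_0$: for such changes $L_0$ is unchanged and $p^{-1}v$ continues to generate the same order-$p$ quotient $\Pi/L_0$, so $\Lambda(p,v)$ is unchanged, matching the recovery of $[v]$ above and completing the inverse bijection. The one point demanding care is the uniform treatment of $p = 2$ together with the parity of $n$, where smoothness of $V(Q)$ rather than literal nondegeneracy of $\ol T$ is the right formulation; the half-discriminant convention is precisely what makes the single hypothesis $p \nmid d$ suffice in all cases.
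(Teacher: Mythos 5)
Your route is, in substance, the paper's: the paper's proof is exactly two steps---well-definedness checked directly from the definition of $\Lambda(p,v)$ in \Cref{laticpr}(iii), and bijectivity from Hensel's lemma (``which applies even when $p=2$'')---and your lift-refinement $Q(v+pw) \equiv Q(v) + p\,T(v,w) \pmod{p^2}$ is precisely that one-step Hensel argument carried out by hand, while your recovery of $[v]$ from the intrinsic sublattice $L_0 = \Lambda \cap \Pi$ is a correct expansion of the injectivity the paper leaves implicit.

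The one genuine soft spot is your central claim that $p \nmid d$ makes $\ol{T}$ nondegenerate on $\Lambda/p\Lambda$. This is false for $p=2$ and $n$ odd: there $\det[T] = 2d$, and $[T]$ reduces mod $2$ to an alternating matrix of odd size, which always has a nontrivial radical (one-dimensional, say spanned by $\ol{r}$, since $d$ odd forces the Smith normal form of $[T]$ over $\Z_2$ to have exactly one elementary divisor equal to $2$). You flag this at the end and correctly name smoothness of $V(Q)$ as the right formulation, but you do not supply the repair, and both of your uses of nondegeneracy need it. The missing input is that the radical line does not lie on the quadric: since $2 \nmid d$, the Jordan splitting of $\Lambda_2$ is (even unimodular of rank $n-1$) $\boxplus$ (rank one with $Q(r) \in \Z_2^{\times}$), so $Q(\ol{r}) = 1 \neq 0$ in $\F_2$. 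Granting this, (a) every $\ol{v} \in V(Q)(\F_2)$ lies outside the radical, so the functional $\ol{T}(\ol{v},\cdot)$ is nonzero and your Hensel step and index computation $[\Lambda : L_0] = p$ go through; and (b) injectivity survives even though line-to-orthogonal-hyperplane is no longer a bijection on all of $\P^{n-1}(\F_2)$: if two quadric lines $\ol{v}, \ol{v}'$ cut out the same hyperplane, then $\ol{T}(\ol{v}+\ol{v}',\cdot) = 0$ (proportional functionals over $\F_2$ are equal), so $\ol{v}+\ol{v}' \in \{0,\ol{r}\}$; but $Q(\ol{v}+\ol{r}) = Q(\ol{v}) + Q(\ol{r}) + \ol{T}(\ol{v},\ol{r}) = Q(\ol{r}) \neq 0$ shows $\ol{v}+\ol{r} \notin V(Q)(\F_2)$, forcing $\ol{v} = \ol{v}'$. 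With that patch your argument is complete and coincides with the paper's.
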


In other words, the $p$-neighbors of $\Lambda$ correspond to isotropic lines in $\Lambda/p\Lambda$, hence their number is $\sim p^{n-2}$.  For example, if $n=3$ then the ternary quadratic form $Q$ defines a smooth conic in $\P^2$, so $Q_{\F_p} \simeq \P_{\F_p}^1$ and $\#Q(\F_p)=p+1$.  If $\disc Q \in \F_{p}^{\times 2}$ and $n$ is even, then 
\begin{equation} \label{eqn:Qnum}
\#Q(\F_p)=1+p+\dots+p^{n/2-2}+2p^{n/2-1}+p^{n/2}+\dots+p^{n-2}. 
\end{equation}

\begin{proof}
We check by definition in \Cref{laticpr}(iii) that $\Lambda(p,v)$ only depends on the $\F_p$-line spanned by $\overline{v} \in \Lambda/p\Lambda \simeq \Lambda \otimes \F_p$, so the map is well-defined.  To see that the map is bijective, we use Hensel's lemma (which applies even when $p=2$): for each $\overline{v} \in \Lambda/p\Lambda$ such that $Q(\overline{v})=0$, there exists $v \in \Lambda$ reducing to $\overline{v}$ such that $p^2 \mid Q(v)$.  
\end{proof}

\begin{example} \label{exm:thompson}
The vector $v \colonequals (0,1,2,\dots,23) \in D_{24}^+$ has $Q(v) = \sum_{i=1}^{23} i^2 = 4324 \equiv 0 \pmod{47}$.  The $47$-neighbor $D_{24}^+(47,v)$ (replacing $v$ with a Hensel lift) is isometric to the famed Leech lattice, whose automorphism group is the Conway group $\mathrm{Co}_0$ (which modulo its center of order $2$ is the Conway group $\mathrm{Co}_1$, a simple group).  This construction is attributed to Thompson \cite[Preface to Third Edition, (14), p.~lvi]{CS0}; see \cref{sec:appl} for further discussion.
\end{example}

\subsection*{Indefinite lattices and iterated neighbors}

Throughout this section, we suppose that $n \geq 3$ (the case $n=2$ being treated using reduction theory following Gauss, and the arithmetic of quadratic fields).  

Kneser's notion of $p$-neighbor gives us a way to exhibit lattices in the genus of $\Lambda$.  As Kneser explains \cite[p.~242]{Kneser}, he was led to this notion by consideration of the easier case when $Q$ is indefinite (equivalently, $Q$ is isotropic over $\R$) \cite{Kneser5}.  More precisely, when the spinor genus coincides with the genus \cite[Chapter 15, \S 9]{CS}, then $\#\Cl \Lambda = 1$ by a result of Eichler \cite{Eichler}.  Indeed, for $p \nmid \disc(\Lambda)$, Eichler calls the $\Z[1/p]$-lattice $\Lambda[1/p] \colonequals \Lambda \otimes \Z[1/p]$ \defi{arithmetically indefinite}, since $Q$ becomes isotropic over $\Q_p$.  Again when the spinor genus coincides with the genus, he concludes that $\#\Cl(\Lambda \otimes \Z[1/p])=1$.  

As a consequence, Kneser observes that if $\Lambda' \in \Gen \Lambda$, then there exists $g \in \Orth(V)$ with $g \in \GL_n(\Z[1/p])$ such that $g\Lambda' \simeq \Lambda$.  Without loss of generality, we may consider the case where $g\Lambda' = \Lambda$.  Then $[\Lambda : \Lambda \cap \Lambda']=[\Lambda' : \Lambda \cap \Lambda']$ is a power of $p$.  Inductive considerations then show the following general theorem.  

\begin{thm} \label{thm:sa}
There exists a finite set $S$ of primes $p$ (depending on $\Gen \Lambda$) with $p \nmid d$ such that every $\Lambda' \in \Gen \Lambda$ is connected to $\Lambda$ by a chain of neighbors.  More precisely, there exist primes $p_1,\dots,p_r \in S$ (not necessarily distinct) and lattices $\Pi_1,\dots,\Pi_r \in \Gen \Lambda$ such that
\begin{equation}
\Lambda \sim_{p_1} \Pi_1 \sim_{p_2} \cdots \sim_{p_r} \Pi_r \simeq \Lambda'. 
\end{equation}
Moreover, if $n \geq 3$ and $d$ is squarefree, then we may take $S=\{p\}$ for any prime $p \nmid d$.
\end{thm}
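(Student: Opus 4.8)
The plan is to use Eichler's strong approximation theorem to reduce the global connectivity problem to a local construction at a single prime, and then to account for several spinor genera by enlarging $S$ with finitely many auxiliary primes. The two key ingredients are: (i) that over the arithmetically indefinite ring $\Z[1/p]$ the class number collapses, so that any $\Lambda' \in \Gen\Lambda$ can be moved by an isometry to a lattice agreeing with $\Lambda$ away from $p$; and (ii) that two integral lattices agreeing away from $p$ and related by a $p$-power index are joined by a chain of $p$-neighbors, which I would prove by induction using the hyperbolic splitting of \Cref{laticpr}(ii).

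First I would treat the case in which the spinor genus coincides with the genus --- exactly the hypothesis under which Eichler \cite{Eichler} gives $\#\Cl(\Lambda[1/p]) = 1$. Fix any prime $p \nmid d$. For $\Lambda' \in \Gen\Lambda$, the $\Z[1/p]$-lattices $\Lambda[1/p]$ and $\Lambda'[1/p]$ then lie in a single class, so there is $g \in \Orth(V)$ with $(g\Lambda')_\ell = \Lambda_\ell$ for all primes $\ell \neq p$. Replacing $\Lambda'$ by the isometric lattice $g\Lambda'$, I may assume $\Lambda$ and $\Lambda'$ agree away from $p$; then the index $[\Lambda : \Lambda \cap \Lambda']$ is supported at $p$, and since $\disc\Lambda = \disc\Lambda'$, formula \eqref{eqn:dlambda} forces $m \colonequals [\Lambda : \Lambda \cap \Lambda'] = [\Lambda' : \Lambda \cap \Lambda']$ to be a power of $p$.

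It then remains to join $\Lambda$ and $\Lambda'$ by a $p$-neighbor chain, which I would do by induction on $v_p(m)$. The case $v_p(m) \le 1$ is the definition of $p$-neighbor. For the inductive step I pass to $V_p$ and take a basis $f_1,\dots,f_n$ of $\Lambda_p$ together with exponents $a_1 \ge \dots \ge a_n$, which sum to $0$ because $\Lambda_p$ and $\Lambda'_p$ are both unimodular, so that $p^{a_1}f_1,\dots,p^{a_n}f_n$ is a basis of $\Lambda'_p$; here $v_p(m) = \sum_{a_i>0} a_i$ and, since $\Lambda_p \neq \Lambda'_p$, we have $a_1 > 0 > a_n$. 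Because $p \nmid d$ and $n \ge 3$, the reduction $\Lambda_p/p\Lambda_p$ is a nondegenerate quadratic space of dimension $\ge 3$ over $\F_p$, hence isotropic (Chevalley--Warning), so by \Cref{cor:pneigh} there is an abundance of $p$-neighbors to choose from; I would select the hyperbolic plane $H_p \subset \Lambda_p$ of \Cref{laticpr}(ii) to be adapted to the extreme exponents $a_1, a_n$, so that the resulting neighbor $\Pi \sim_p \Lambda$ satisfies $v_p([\Pi : \Pi \cap \Lambda']) = v_p(m) - 1$. Since $\Pi$ again agrees with $\Lambda'$ away from $p$ and lies in $\Gen\Lambda$, the induction closes and produces the chain.

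For the general statement I would decompose $\Gen\Lambda$ into its finitely many spinor genera. The argument above shows that $p$-neighbors connect all lattices lying in a common class over $\Z[1/p]$; the obstruction to connecting the remaining ones is measured by the spinor class group, a finite elementary abelian $2$-group acting simply transitively on spinor genera. Choosing, for each generator, an auxiliary prime $p' \nmid d$ whose local spinor norm realizes that generator, and adjoining these finitely many $p'$ to $p$, yields a finite set $S$ connecting all of $\Gen\Lambda$. Finally, when $d$ is squarefree and $n \ge 3$, each local lattice $\Lambda_\ell$ is maximal, so the spinor norm of $\Orth(\Lambda_\ell)$ exhausts the local units at every $\ell$; this forces the spinor genus to coincide with the genus (see \cite[Chapter~15, \S 9]{CS}), the single-prime case applies, and we may take $S = \{p\}$ for any $p \nmid d$.

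I expect the main obstacle to be the inductive local step: producing at each stage a hyperbolic plane inside $\Lambda_p$ that is aligned with the elementary divisors of $\Lambda'_p$ so that one Kneser neighbor genuinely decreases the $p$-adic distance. Conceptually this is the assertion that the relevant vertices of the Bruhat--Tits building of $\Orth(V_p)$ are connected through neighbor-adjacencies, and the care needed to keep an isotropic hyperbolic plane available at each step --- uniformly including $p = 2$, where the even-lattice conventions and Hensel's lemma must be invoked as in \Cref{cor:pneigh} --- is where the genuine work lies. By comparison, the spinor-class-group bookkeeping for the general $S$, though delicate, is a finite and essentially formal addition once the single-prime connectivity is in hand.
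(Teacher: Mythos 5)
Your proposal is correct and takes essentially the same route as the paper: the paper's proof defers to Chenevier--Lannes, Greenberg--Voight, and Scharlau--Hemkemeier, but the discussion immediately preceding \Cref{thm:sa} sketches exactly your argument---Eichler's result that $\#\Cl(\Lambda \otimes \Z[1/p])=1$ when the spinor genus coincides with the genus, reduction to lattices agreeing away from $p$ with $p$-power index, and ``inductive considerations'' on that index---and your spinor-genus bookkeeping with auxiliary primes, together with the squarefree-discriminant criterion, matches the cited treatments and the remark following the theorem. Nothing in your write-up diverges from that approach, so no further comparison is needed.
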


\begin{proof}
For the unimodular case, see Chenevier--Lannes \cite[Theorem 3.1.12]{CL} for a proof in the spirit of Kneser's original argument.  More generally, see Greenberg--Voight \cite[Theorem 5.8]{GV} and Scharlau--Hemkemeier \cite[end of \S 2]{SchHem}.  
\end{proof}

\begin{rmk}
In \Cref{thm:sa}, concerning the second statement, Hsia--Jochner \cite[Corollary 4.1]{HJ} have shown that when $d$ is squarefree and $p$ sufficiently large, in fact every class in $\Gen \Lambda$ is directly a $p$-neighbor of $\Lambda$, without iteration.  For the final statement, the given hypothesis on the discriminant may be replaced by the (more general) hypothesis that the spinor genus coincides with the genus.
\end{rmk}

\begin{example} \label{exm:nomass}
We return to \Cref{exm:311}, taking $\Lambda=\Z^3=\Lambda_1$.  The solutions to $Q(x,y,z) \equiv 0 \pmod{2}$ are given by
\[ (1:0:1),(1:1:0),(1:1:1) \]
giving rise to $2+1=3$ neighbors for $p=2$.  The first vector lifts to $v=e_1+e_3$ as in the previous example, giving rise to the lattice 
\begin{equation}
\begin{aligned} 
\Lambda_2 &=\Lambda(2,v)=\frac{1}{2}(e_1+e_3)\Z + e_1 \Z + (2e_2)\Z + e_3 \Z \\
&= \frac{1}{2}(e_1+e_3)\Z + (2e_2)\Z + e_3 \Z
\end{aligned}
\end{equation}
with $\Lambda_2 \not\simeq \Lambda_1$.  For the $2$ other neighbors, we lift and find that $\Lambda(2,(1,1,2)) \simeq \Lambda_1$ and $\Lambda(2,(1,-1,1)) \simeq \Lambda_2$, classes we have already seen. 

Repeating with $\Lambda_2$, we find that all of its $2$-neighbors are isometric to $\Lambda_1$.  And we will not see any new classes from the other $2$-neighbors!  We conclude again that $\#\Cl \Lambda = 2$.
\end{example}

In particular, \Cref{exm:nomass} shows that even without using the mass formula, one can exhaust representatives for $\Cl \Lambda$ using \Cref{thm:sa}.  

\begin{remark}
\Cref{thm:sa} fits into the broader framework of \emph{strong approximation}, a subject also pioneered by Kneser \cite{Kneser-strong}: for a complete history and further reference, see the survey by Scharlau \cite{Scha2009}.  Let $F$ be a global field with $\underline{F} \colonequals \tprodprime{v} F_v$ the adele ring of $F$.  Let $S$ be a finite set of places of $F$.  Let $G$ be a simply-connected, simple linear algebraic group defined over $F$, such that $G(F_S) \colonequals \prod_{v \in S} G(F_v)$ is not compact.  Then $G$ has the \emph{strong approximation property} with respect to $S$: namely, $G(F)G(F_S)$ is dense in $G(\underline{F})$.  

The complications in \Cref{thm:sa} above arise because $\Orth(V)$ is not connected and $\SO(V)$ is not simply connected, but the spin cover $\Spin(V)$ of $\SO(V)$ \emph{is} simply connected.  Over $F=\Q$, taking $S=\{\infty\}$ we see that if $V$ is indefinite, then the spinor genus and the class agree; if $V$ is definite, but $\Orth(V_p)$ is indefinite (i.e., $V_p$ is isotropic), then every class in the spinor genus of $\Lambda$ has a representative which differs only from $\Lambda$ at $p$.  
\end{remark}

\section{Algorithms, generalizations, and applications} \label{sec:appl}

In this section, we survey some further practical and theoretical aspects and applications of neighbors.
 
\subsection*{Algorithms}

The method of $p$-neighbors can be readily implemented on a computer.  For an algorithmic description, see Schulze-Pillot \cite[\S 2]{SP} or Greenberg--Voight \cite[\S 3]{GV}.  Implementations are available in \textsf{Magma} \cite{Magma}, \textsf{Sage} \cite{Sage}, as well as other specialized packages for work with lattices.  The set of $p$-neighbors can be computed in time $O(p^{n-2+\epsilon} H_n(\|\Lambda\|))$ for any $\epsilon>0$, where $\|\Lambda\|$ is the bit size of the Gram matrix and $H_n$ is a polynomial depending only on $n$, counting bit operations in computing the Hermite normal form of an integer matrix: for more detail, see Hein \cite[\S 5.4]{Hein}.

The most time-consuming step in the above strategy of iterating $p$-neighbors concerns isometry testing.  When the quadratic space $V$ is \emph{definite} (the interesting case), there is a practical algorithm to check if two lattices $\Lambda,\Lambda'$ are isometric due to Plesken--Souvignier \cite{PS}: they match up short vectors, using many pruning shortcuts and other optimizations to either exhibit an isometry or rule it out as early as possible.  For theoretical purposes, Haviv--Regev \cite{HR} proposed algorithms for this purpose with a complexity of $n^{O(n)} (\|\Lambda\|+\|\Lambda'\|)^{O(1)}$.  Indeed, the mass formula shows that the growth rate of the class number is at least $n^{n^2}$.

For fixed rank $n$, a deterministic, polynomial time, practical algorithm was exhibited by Dutour Sikiri\'{c}--Haensch--Voight--van Woerden \cite{canonical}, using reduction to a canonical form.  On the other hand, when the rank is small ($n \leq 5$), an appropriately optimized version of Minkowski reduction can be used: for example, when $n=3$ we can use Eisenstein reduction, and Schulze--Pillot \cite[pp.\ 138--140]{SP} works out the case $n=4$.  These methods also permit the computation of the finite group $\Orth(\Lambda)$.

These two estimates can be combined (depending on the algorithm used) to estimate the running time of computing representatives for the class set $\Cl \Lambda$, when the spinor genus is equal to the genus.  More generally, we need to know that the set $S$ in \Cref{thm:sa} is effectively computable, which can be shown using methods of class field theory.

However, the procedure becomes impractical for large rank $n$ because of the (exponentially) large number of neighbors.  On the other hand, if $\Lambda$ has a large automorphism group, one need only run through a set of representatives of $p$-neighbors under this group---this observation was already made by Kneser, and it was essential for his application.  (This can only improve the running time by a constant factor in fixed rank.)

\subsection*{Generalizations}

The notion of $p$-neighbors admits important generalizations, as follows.  

\begin{itemize}
\item For $k \geq 1$, we define a \defi{$p^k$-neighbor} of $\Lambda$ to be an integral lattice $\Pi \subset V$ such that $\Lambda/(\Lambda \cap \Pi) \simeq \Pi/(\Lambda \cap \Pi) \simeq (\Z/p\Z)^k$.  Since a $p^k$-neighbor is obtained from an iterated $p$-neighbor, this notion does not give a new approach to enumerating classes in the genus; it is, however, important in the application to modular forms (\cref{sec:modularforms}).  One may similarly define \emph{$A$-neighbors} for an arbitrary finite group $A$.
\item Using \Cref{laticpr}(ii), we can also adapt the notion of $p$-neighbors (that stay in $\Gen \Lambda$) when $p \mid d$ \cite{SP}.
\item There is also a natural extension of the notion of neighbors to Hermitian forms, relative to a  quadratic field $K$, as developed by Hoffmann \cite{Hoffmann} and Schiemann \cite{Schiemann}.  (For further generalizations to semisimple groups, including symplectic groups, see the final section.)  The method is substantially similar, including the application of strong approximation.  However, there are certain additional technicalities: owing, for example, to a nontrivial class group in the ring of integers of $K$, cases depending on the splitting behavior of $p$ in $K$, and so on.  
\item For both orthogonal and unitary groups, neighbors generalize to the case of (totally positive definite) lattices over the ring of integers of a totally real field.  Indeed, Greenberg--Voight \cite{GV} present neighbors in both cases in this level of generality.
\end{itemize}
   
\subsection*{Applications} 

Kneser's original application to the enumeration of classes in certain genera was further developed and extended by many authors.  

Perhaps the first most significant stride was made by Niemeier \cite{Niemeier} in 1968, who classified (even) lattices of discriminant $d=1$ in $n=24$ variables, finding exactly $24$ classes (accordingly called \defi{Niemeier lattices}) including the Leech lattice.  Although the method of neighbors shows that the class set in this case is effectively computable, a naive effort would involve testing far too many neighbors.  Instead, Niemeier studied certain sublattices, and from there shows uniqueness of neighbors without minimal vectors (those with $Q(x)=1$), and from there he shows that starting from the list of $24$, no new lattices are obtained from $2$-neighbors.  For more, see Conway--Sloane \cite{cs16} and Venkov \cite{Venkov-cs}.  

The classification of unimodular lattices using Kneser's method continues.  Venkov \cite{Venkov} used neighbors to obtain a partial classification of odd unimodular lattices of rank at most $24$.  Bacher \cite{Bacher} gave explicit constructions for the odd unimodular lattices of dimension at most $24$ as $\Z/k\Z$-neighbors of the lattice $\Z^n$ (with the dot product) with $k \in \Z_{\geq 2}$, in the spirit of \Cref{exm:thompson}; he also gave an analogous description for even unimodular lattices.  
Results also extend beyond rank $24$: see Bacher--Venkov \cite{BV} and upcoming work by Chenevier \cite{Chen-uni} and Allombert--Chenevier \cite{AC}, reaching rank $28$.

Scharlau--Hemkemeier \cite{SchHem} describe a computer implementation of the neighbor method over totally real fields; using their optimized implementation of $2$-neighbors over $\Z$, they classified \emph{$\ell$-elementary} lattices (those whose discriminant group is an elementary abelian $\ell$-group for a prime $\ell$) for small values of $\ell$ and $n$.

Neighbors also permit classification of extremal lattices.  In a nutshell, a lattice $\Lambda$ is \emph{extremal} if $\min \Lambda \colonequals \{ Q(x) : x \in \Lambda, x \neq 0\}$ is as large as possible (from the point of view of modular forms).  Scharlau--Schulze-Pillot \cite{SSP} give a self-contained overview of the topic and classify extremal lattices in moderate dimension using a computer implementation of Kneser's neighbor method.  (See also more recent work of Scharlau \cite{Scharlau-extremal}, settling an open case of dimension $14$ and level $7$ in which the genus has $83006$ classes!)

For the unitary case, Hoffmann \cite{Hoffmann} and Schiemann \cite{Schiemann} computed tables of class numbers of positive definite unimodular Hermitian forms over the rings of integers of certain imaginary quadratic fields using implementations of the (adapted) neighboring method.

Neighbors also are used in applications to algebraic geometry, where lattices arise naturally as Picard groups of surfaces equipped with their intersection form \cite{Nikulin}.  For example, neighbors were used to classify even hyperbolic lattices with infinitely many simple $(-2)$-roots which admit an isotropic vector with bounded inner product with all the simple $(-2)$-roots, giving a classification of K3 surfaces with zero entropy \cite{BM,X}.  Neighbors are also used in the classification symplectic birational involutions of some compact hyper-K\"ahler manifolds \cite{MM}.

Returning in some sense to the original application, we conclude by observing that neighbors can be used to determine isometries, as follows.  Two lattices $\Lambda_1$ and $\Lambda_2$ are in the same genus if and only if $H \boxplus \Lambda_1 \simeq H \boxplus \Lambda_2$, where $H$ is the hyperbolic plane (a statement likely known to Kneser).  Indeed, a constructive proof of this statement is given by Brandhorst--Elkies \cite[Lemma 2.6]{BE}, where an explicit isometry is computed using a neighbor relation between $\Lambda_1$ and $\Lambda_2$.  This approach was a key ingredient in computing equations for Hilbert modular surfaces using K3 surfaces by Elkies--Kumar \cite{EK}.

\section{Modular forms} \label{sec:modularforms}

In this section, we explain how $p$-neighbors define a notion of adjacency on the class set, and thereby a regular directed graph; the adjacency matrix of this graph defines a Hecke operator on a space of orthogonal modular forms.  This reveals a harmonizing, combinatorial structure underlying neighbors.  
Throughout, let $\Lambda \subset V$ be a lattice with discriminant $d=\disc \Lambda$ in a positive definite quadratic space $V$. 

\subsection*{Adjacency}

To exhaust representatives for $\Cl \Lambda$, we used iterated $p$-neighbors.  We now encode the number of neighbors we see (up to isometry) in a graph, as follows.

For each $p \nmid d$, we define the \defi{$p$-neighbor graph} $G_p=G_p(\Cl \Lambda)$ as follows.
\begin{itemize}
\item The set of vertices of $G_p$ is $\Cl \Lambda=\{[\Lambda_1],\dots,[\Lambda_h]\}$, where $h=\#\Cl \Lambda$.
\item For each $\Lambda_i$, and each $p$-neighbor $\Pi_{ij} \sim_p \Lambda_i$, we draw a directed edge $[\Lambda_i] \to [\Pi_{ij}]$.
\end{itemize}
We are now interested in not just the vertices of the graph, but also the edges (which are independent of the choice of representatives $\Lambda_i$).  The graph $G_p$ is a $k$-regular directed graph, where $k$ is the number of $p$-neighbors.  (In general, $G_p$ may have several connected components, due to spinor genera; see \Cref{thm:sa}.)

Let $[T_p]$ be the adjacency matrix of $G_p$.

\begin{example} \label{exm:collectingt2}
We recall \Cref{exm:nomass}, where we looked at $2$-neighbors among ternary quadratic lattices of discriminant $11$.  Collecting the neighboring relation in the graph $G_2$, we have
\begin{center}
\includegraphics{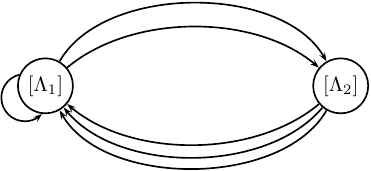}
\end{center}
and $[T_2]=\begin{pmatrix} 1 & 2 \\ 3 & 0 \end{pmatrix}$.  We similarly compute
$[T_3]=\begin{pmatrix} 2 & 2 \\ 3 & 1 \end{pmatrix}$, $[T_5]=\begin{pmatrix} 4 & 2 \\ 3 & 3 \end{pmatrix}$, and so on.
\end{example}

The $p$-neighbor graphs have remarkable properties.  For $n=3$, they are special types of expander graphs called \emph{Ramanujan graphs}: for $k\colonequals p+1$, they are $k$-regular (directed) graphs whose eigenvalues other than $\pm k$ have absolute value at most $2\sqrt{k-1}$.  For more, see Lubotzky--Phillips--Sarnak \cite{LPS} and Margulis \cite{M}.  For $n>3$, the $p$-neighbor graphs are no longer Ramanujan, but they still satisfy a spectral gap property.  In general, the diameters of these graphs are small, and so for large primes the neighbors of $\Lambda$ are equidistributed (weighted inversely proportional to automorphism groups) among the classes: see Schulze-Pillot \cite[Corollary p.~120]{SP86} and more generally recent work of Chenevier \cite[Theorem A]{CStat}.

\subsection*{Orthogonal modular forms and Hecke operators}

With this visual intuition, we now reinterpret the adjacency matrix as an operator on the space of functions on $\Cl \Lambda$.  The space of \defi{orthogonal modular forms for $\Lambda$} (with trivial weight) is
\[ M(\Orth(\Lambda))  \colonequals \Map(\Cl \Lambda,\C).   \]
In the basis of characteristic functions for $\Cl \Lambda$, we have $M(\Orth(\Lambda)) \simeq \C^h$ where $h \colonequals \#\Cl(\Lambda)$.  

For $p \nmid \disc(\Lambda)$, we define the \defi{Hecke operator} 
\begin{equation} 
\begin{aligned}
T_p \colon M(\Orth(\Lambda)) &\to M(\Orth(\Lambda))   \\
f &\mapsto T_p(f) \\  
T_p(f)([\Lambda'])   &= \sum_{\Pi' \sim_p\, \Lambda'} f([\Pi']).  
\end{aligned} 
\end{equation}
In words, the value of $T_p(f)$ on a lattice $[\Lambda']$ is the sum of the values of $f$ on the classes $[\Pi']$ for all $p$-neighbors $\Pi' \sim_p \Lambda'$.

In particular, if $f$ is the characteristic function of $[\Lambda]$, then $(T_p f)([\Lambda'])$ is the number of $p$-neighbors of $\Lambda'$ isometric to $\Lambda$.  Therefore, the matrix of $T_p$ in the basis of characteristic functions is the adjacency matrix $[T_p]$ of the graph $G_p$.

\begin{rmk}
Already the idea of a vector space spanned by isometry classes of quadratic forms, and the connection between Hecke operators and theta series (see below), appears in work of Eichler \cite[Kapitel 4]{Eichler:book}.
\end{rmk}

The operators $T_p$ pairwise commute and are self-adjoint with respect to a natural inner product.  So there is a basis of simultaneous eigenvectors, called \defi{eigenforms}.  Since $G_p$ is $k$-regular, the vector $e=(1,\dots,1)^{\intercal}$ is always an eigenvector with eigenvalue equal to $k$.  

We now give a few examples to illustrate how the Hecke operators (counting $p$-neighbors) provide a rich vein of arithmetic data.

\begin{example} \label{exm:eigenv1}
Continuing \Cref{exm:collectingt2}, we find eigenvectors $e=\begin{pmatrix} 1 \\ 1 \end{pmatrix}$ and $f=\begin{pmatrix} 2 \\ -3 \end{pmatrix}$ spanning $M(\Orth(\Lambda))$ (written in the basis of characteristic functions).
For $p \neq 11$, we have $T_p(e)=(p+1)e$ and $T_p(f)=a_p f$ with   
\begin{equation} 
a_2=-2,\ a_3=-1,\ a_5=1, \dots .
\end{equation}
These coefficients belong to the series
\[ f(q) = q\prod_{n=1}^{\infty} (1-q^n)^2(1-q^{11n})^2 = \sum_{n=1}^{\infty} a_n q^n \]
where $f(q) \in S_2(\Gamma_0(11))$ is the unique cuspidal newform of level $11$ and weight $2$ with LMFDB \cite{LMFDB} label \href{http://www.lmfdb.org/ModularForm/GL2/Q/holomorphic/11/2/a/a/}{\textsf{11.2.a.a}}.  A final alternate expression, coming from the modularity of elliptic curves, says that $a_p=p+1-\#A(\F_p)$ where $A$ is the elliptic curve \href{http://www.lmfdb.org/EllipticCurve/Q/11/a/3}{\textsf{11a3}} defined by
\[ A \colon y^2+y=x^3+x^2. \]
\end{example}

\begin{example} \label{exm:eigenv2}
In beautiful work (and recalling \Cref{exm:unimodular}), Chenevier--Lannes \cite[\S 1.2, Theorem A]{CL} show that for $n=16$ and $d=1$, we have
\begin{equation} 
[T_p] = \#D_{16}^+(\F_p)+(1+p+p^2+p^3)\frac{1+p^{11}-\tau(p)}{691} \begin{pmatrix} -405 & 286 \\ 405 & -286 \end{pmatrix}  
\end{equation}
in the basis $[E_8 \boxplus E_8]$, $[D_{16}^+]$ and where $\#D_{16}^+(\F_p)=\#(E_8 \boxplus E_8)(\F_p)$ is an explicit polynomial in $p$ \eqref{eqn:Qnum} and
\[ \Delta(q)=q \prod_{n=1}^{\infty} (1-q^n)^{24} = \sum_{n=1}^{\infty} \tau(n)q^n \]
is the discriminant modular form \href{http://www.lmfdb.org/ModularForm/GL2/Q/holomorphic/1/12/a/a/}{\textsf{1.12.a.a}} of weight $12$ and level $1$.  

Concisely put, the number of $p$-neighbors of $D_{16}^+$ isometric to $D_{16}^+$ is given by the coefficient $\tau(p)$, up to an explicit factor! 
\end{example}

Nebe--Venkov \cite{NV} begin their detailed investigations by computing the matrix $[T_2]$ for the Niemeier lattices, using Ph.D.\ thesis work of Borcherds \cite{Borcherds-thesis}.  The more general (and truly miraculous) story is told in the work of Chenevier--Lannes \cite[Theorem D]{CL}.  See also work of M\'egarban\'e \cite{Megarbane}, who studied lattices of rank $n=23,25$ with $d=1$.  

Algorithms for neighbors (cf.\ \cref{sec:appl}) extend to allow the computation of Hecke operators, including higher weight.  For a recent overview, see Assaf--Fretwell--Ingalls--Logan--Secord--Voight \cite[\S 3]{AFILSV}.

\subsection*{Theta series, and low rank}

We now briefly indicate how orthogonal modular forms are related to classical and Siegel modular forms via theta series, as indicated in the preceding examples.  For further reference, see e.g.\ Cohen--Str\"omberg \cite[\S 1.6]{CohS} and Freitag \cite{Freitag}.

The simplest theta series is
\begin{equation} 
\theta_{\Lambda}(q) \colonequals \sum_{x \in \Z^n} q^{Q(x)} = \sum_{m=0}^{\infty} r_Q(m) q^{m} \in \Z[[q]] 
\end{equation}
where $r_Q(m) \colonequals \#\{x \in \Z^n : Q(x)=m\}$ counts the number of representations of $m \in \Z_{\geq 0}$ (by the quadratic form $Q$ attached to $\Lambda$).  The series $\theta_{\Lambda}(q)$ is not only a formal power series in the variable $q$.  In fact, letting $q = e^{2\pi i z}$ with $z \in \C$ in the upper half-plane (i.e., $\impart z >0$), Poisson summation shows that $\theta_{\Lambda}$ is a classical modular form.  More precisely, it has level $4d$, weight $n/2$, and character $\chi_{d^*}$ attached to the quadratic field of discriminant $d^* \colonequals 1,(-1)^{n/2} d$ according as $n$ is odd or even.

More generally, we ask for the number of representations of other quadratic forms by $Q$.  For $g \in \Z_{\geq 1}$, we define the \defi{theta series}
\begin{equation}
\theta^{(g)}(\Lambda)(\tau) \colonequals \sum_{A\in \Mat_{n,g}(\Z)}e^{\pi i \tr(A^{\intercal}[T] A \tau)},
\end{equation}
where the sum is over $n \times g$ matrices $A$
and $\tau$ is a variable in the Siegel upper half-plane 
\begin{equation} 
\mathcal{H}_g \colonequals \{\tau\in M_g(\C) : \tau^{\intercal} = \tau \text{ and } \impart(\tau)>0\}.
\end{equation}
Generalizing the case $g=1$, the series $\theta^{(g)}$ is a Siegel modular form.  More precisely, extending linearly, we obtain a map $M(\Lambda) \to M_{n/2}(\Gamma_0^{(g)}(4d),\chi_{d^*})$ from the space of orthogonal modular forms to the space of Siegel modular forms of weight $n/2$, level $\Gamma_0^{(g)}(d)$, and quadratic character $\chi_{d^*}$.  

For $p\nmid d$, results of Rallis \cite{Rallis} relate the action of $p^k$-neighbor operators on an eigenform $\phi$ with the action of Hecke operators at $p$ acting on the associated Siegel modular form $\theta^{(g)}(\phi)$, which in particular relate the Hecke eigenvalues.  The statement of Rallis \cite[Remark 4.4]{Rallis} is also explained by Chenevier--Lannes \cite[Corollary 7.1.3]{CL} in the unimodular case.  See also work of Assaf--Fretwell--Ingalls--Logan--Secord--Voight \cite{AFILSV} for further discussion, references, and examples.  

When the number of variables $n$ is small, owing to the exceptional isomorphisms in Lie theory, this yields a tight connection between orthogonal modular forms and classical, Hilbert, and Siegel modular forms.  
\begin{itemize}
\item For $n=3$, the association between orthogonal modular forms as presented here and classical modular forms was first exhibited by Birch \cite{birch}.  Preceding Birch, Ponomarev \cite{Ponomarev:ternary} (building upon work of Eichler for $n=4$, see the next bullet point) observed a relation between certain lattices in the similitude genus and theta series, which he viewed as ``purely arithmetic''.  The work of Birch was refined and generalized by Hein \cite{Hein} and Hein--Tornar{\'\i}a--Voight \cite{HeinTornariaVoight}.  
\item For $n=4$, there is far more to say than can fit into this survey!  In some sense, the connection goes back as far as the original work of Brandt and Eichler in the case of quaternary forms attached to quaternion algebras.  For a partial overview, see Voight \cite[Chapter 41]{Voight}; to give just a few further references, we mention work of Yoshida \cite{Yoshida}, Ponomarev \cite{Ponomarev}, and B\"ocherer--Schulze-Pillot \cite{BSP}.
\item The case $n=5$ is more recent: see Dummigan--Pacetti--Rama--Tornar{\'\i}a \cite{dprt} for the case $d$ squarefree, building on previous work of Ibukiyama \cite{Ibu19}.  
\end{itemize}
In all three cases, the associations can be understood as being furnished by Clifford algebras.

\subsection*{Algebraic modular forms}

As briefly indicated at the end of \cref{sec:lattices}, the study of neighbors extends from the orthogonal group more generally by the theory of \emph{algebraic modular forms} due to Gross \cite{Gross1}.  

Let $F$ be a number field with ring of integers $\Z_F$ and $\widehat{F} \colonequals \tprodprime{\frakp} \,F_\frakp$ the finite adele ring of $F$ and $\widehat{\Z}_F \colonequals \prod_\frakp \Z_{F,\frakp} < \widehat{F}$ the profinite completion of $\widehat{\Z}$, so that $\widehat{F} = \widehat{\Z}_F \otimes_{\Z_F} F$.  Let $G$ be a reductive algebraic group over $F$.  Suppose that the real Lie group $G_\infty \colonequals G(F \otimes \R)$ is \emph{compact}.  Let $\widehat{K} < G(\widehat{F})$ be an open compact subgroup.  Then the double coset $G(F) \backslash G(\widehat{F}) / \widehat{K}$ is a finite set, an example of a zero-dimensional \emph{Shimura variety}.

\begin{example}
We may take $G \colonequals \Orth(V)$ for a positive definite quadratic space $V$ over a totally real field $F$: then $G_\infty = \prod_v \Orth(V_v)$ is the product over the real embeddings $v \colon F \hookrightarrow V$ of (compact) orthogonal groups.  For a $\Z_F$-lattice $\Lambda \subset V$ with $\widehat{\Lambda} \colonequals \Lambda \otimes \widehat{\Z}_F$, we may take 
\[ \widehat{K} \colonequals \Orth(\widehat{\Lambda})=\prod_\frakp \Orth(\Lambda_\frakp). \]
Then $G(\widehat{F})/\widehat{K}$ is in bijection with $\Gen \Lambda$, and the double coset $G(F) \backslash G(\widehat{F})/\widehat{K}$ is in bijection with $\Cl \Lambda$ \cite[\S 3]{GV}.
\end{example}

Let $\rho \colon G \to W$ be an algebraic (e.g., irreducible) representation, where $W$ is a finite-dimensional $E$-vector space with $[E:\Q]<\infty$.  An \defi{algebraic modular form} for $G$ of weight $W$ and level $\widehat{K}$ is a function $f \colon G(\widehat{F}) \to W$ such that
\[ f(\gamma \widehat{g}\widehat{u}) = \gamma f(\widehat{g}) \]
for all $\gamma \in G(F)$, $\widehat{g} \in G(\widehat{F})$, and $\widehat{u} \in \widehat{K}$.  Let $M_W(G,\widehat{K})$ be the $E$-vector space of algebraic modular forms for $G$ of weight $W$ and level $\widehat{K}$.  Write
\begin{equation} 
G(\widehat{F}) = \bigsqcup_{i=1}^h G(F) \widehat{x_i} \widehat{K} 
\end{equation}
and $\Gamma_i \colonequals G(\Q) \cap \widehat{x_i} \widehat{K} \widehat{x_i}^{-1}$ (a discrete subgroup of the compact group $G_\infty$, so finite).  Then we have an isomorphism
\begin{equation}
\begin{aligned}
M_W(G,\widehat{K}) &\xrightarrow{\sim} \bigoplus_{i=1}^h H^0(\Gamma_i,W) \\
f &\mapsto (f(\widehat{x_i}))_i
\end{aligned}
\end{equation}
where $H^0(\Gamma_i,W) = \{w \in W : \gamma w = w \text{ for all $\gamma \in \Gamma_i$}\}$ is the fixed subspace.  

The \defi{Hecke algebra} is the ring of locally constant, compactly supported, $\widehat{K}$-bi-invariant functions on $G(\widehat{F})$; it is generated by characteristic functions $T_{\widehat{p}}$ of $\widehat{K} \widehat{p} \widehat{K}$ for $\widehat{p} \in \widehat{G}$: writing 
\[ \widehat{K} \widehat{p} \widehat{K} = \bigsqcup_j \widehat{p}_j \widehat{K} \]
(a finite disjoint union) we define
\begin{equation} 
(T_{\widehat{p}} f)(\widehat{g}) = \sum_j^{<\infty} f(\widehat{g} \widehat{p_j}). 
\end{equation}

In this way, we recover the definition of the Hecke operator $T_p$ from \cref{sec:modularforms} as the operator $T_{\widehat{p}}$, defined as follows: when $\Lambda_p= H_p \boxplus \Lambda_p'$, as in \Cref{laticpr}(ii) for $F=\Q$ and $W=\Q$ the trivial representation, we take the operator with component $\diag(1/p,p,1,\dots,1)$ at $p$ and $1$ otherwise.

This lattice interpretation of Hecke operators was elaborated upon by Greenberg--Voight \cite{GV} for orthogonal and unitary groups and for symplectic groups by Chisholm \cite{Chisholm} and Sch\"onnen\-beck \cite{Schonnen}.  Using invariant forms, it would be interesting to extend this to include exceptional groups, building on the group-theoretic description due to Lansky--Pollack \cite{LP}.  Indeed, recent work by Cohen--Nebe--Plesken \cite{CNP} and Kirschmer \cite{Kirschmer-exceptional} use Kneser neighbors algorithmically to enumerate forms and one-class genera for exceptional groups.  And so we anticipate further developments in---and applications of---Kneser's theory of $p$-neighbors in the years to come!


\begin{thebibliography}{999}

\bibitem{AC}
Bill Allombert and Ga\"etan Chenevier, \emph{Unimodular hunting II}, preprint, 2023, \url{http://gaetan.chenevier.perso.math.cnrs.fr/pub.html}.

\bibitem{AFILSV}
Eran Assaf, Dan Fretwell, Colin Ingalls, Adam Logan, Spencer Secord, and John Voight, \emph{Definite orthogonal modular forms: computations, excursions, and discoveries}, Res.\ Number Theory.\ \textbf{8}:70 (2022), 38 pages. 

\bibitem{Bacher}
Roland Bacher, \emph{Tables de r\'eseaux entiers unimodulaires construits comme $k$-voisins de $\mathbf{Z}^n$}, J.\ Th\'eor.\ Nombres Bordeaux \textbf{9} (1997), no.~2, 479--497.

\bibitem{BV}
Roland Bacher and Boris Venkov, \emph{R\'eseaux entiers unimodulaires sans racines en dimensions 27 et 28}, R\'eseaux euclidiens, designs sph\'eriques et formes modulaires, Monogr.\ Enseign.\ Math., vol.~37, Enseignement Math., Geneva, 2001, 212--267.

\bibitem{Borcherds-thesis}
Richard Borcherds, \emph{The Leech lattice and other lattices}, Ph.D.\ thesis, Trinity College, University of Cambridge, 1984.

\bibitem{birch}
B.\ J. Birch, \emph{Hecke actions on classes of ternary quadratic forms}, Computational number theory (Debrecen, 1989), 1991, 191--212.

\bibitem{BSP}
Siegfried B\"ocherer and Rainer Schulze-Pillot, \emph{Siegel modular forms and theta series attached to quaternion algebras}, Nagoya Math.\ J.\ \textbf{121} (1991), 35--96.

\bibitem{Magma}
Wieb Bosma, John Cannon, and Catherine Playoust, \emph{The Magma algebra system.\ I.\ The user language}, J.\ Symbolic Comput.\ \textbf{24} (1997), vol.\ 3--4, 235--265.

\bibitem{BE}
Simon Brandhorst and Noam Elkies, \emph{Equations for a K3 Lehmer map}, J.\ Algebraic Geom., 2023, \url{https://doi.org/10.1090/jag/810}.

\bibitem{BM}
Simon Brandhorst and Giacomo Mezzedimi, \emph{Borcherds lattices and K3 surfaces of zero entropy}, 2022, preprint, \texttt{arXiv:2211.09600}.

\bibitem{Cas}
J.~W.~S.\ Cassels, \emph{Rational quadratic forms}, London Math.\ Soc. Monographs, vol.~13, Academic Press, Inc., London-New York, 1978. 

\bibitem{CStat}
Ga\"etan Chenevier, \emph{Statistics for Kneser $p$-neighbors}, with an appendix by Olivier Ta\"ibi, Bull.\ Soc.\ Math.\ France \textbf{150} (2022), no.~3, 473--516. 

\bibitem{Chen-uni}
Ga\"etan Chenevier, \emph{Unimodular hunting}, preprint, 2023, \url{http://gaetan.chenevier.perso.math.cnrs.fr/pub.html}.

\bibitem{CL}
Ga\"etan Chenevier and Jean Lannes, \emph{Automorphic forms and even unimodular lattices. Kneser neighbors of Niemeier lattices}, Ergeb.\ Math.\ Grenzgeb.\ (3), vol.~69, Springer, Cham, 2019.

\bibitem{Chisholm}
Sarah Chisholm, \emph{Algorithmic enumeration of quaternionic lattices}, Ph.D.\ thesis, University
of Calgary, 2014.

\bibitem{CNP}
Arjeh M.\ Cohen, Gabriele Nebe, and Wilhelm Plesken, \emph{Maximal integral forms of the algebraic group $G_2$ defined by finite subgroups}, J.\ Number Theory \textbf{72} (1998), no.~2, 282--308. 


\bibitem{CohS}
Henri Cohen and Fredrik Str\"omberg, \emph{Modular forms. A classical approach}, Grad.\ Stud.\ Math., vol.~179, Amer.\ Math.\ Soc., Providence, RI, 2017. 


\bibitem{CSIV}
J.~H.\ Conway and N.~J.~A.\ Sloane, \emph{Low-Dimensional Lattices. IV. The mass formula}, Proc.\ Royal Soc.\ Lond.\ Ser.\ A \textbf{419} (1988), 259--286.

\bibitem{CS0}
J.~H.\ Conway and N.~J.~A.\ Sloane, \emph{Sphere packings, lattices and groups}, 3rd.~ed., Grundlehren Math.\ Wiss., vol.~290, Springer-Verlag, New York, 1999.

\bibitem{CS4}
J.~H.\ Conway and N.~J.~A.\ Sloane, \emph{Certain important lattices and their properties}, in J.H.\ Conway and N.~J.~A.\ Sloane, Sphere packings, lattices and groups, 3rd.~ed., Grundlehren Math.\ Wiss., vol.~290, Springer-Verlag, New York, 1999, 94--135.

\bibitem{CS}
J.~H.\ Conway and N.~J.~A.\ Sloane, \emph{On the classification of integral quadratic forms}, in J.H.\ Conway and N.~J.~A.\ Sloane, Sphere packings, lattices and groups, 3rd.~ed., Grundlehren Math.\ Wiss., vol.~290, Springer-Verlag, New York, 1999, 352--405.

\bibitem{cs16}
J.~H.\ Conway and N.~J.~A.\ Sloane, \emph{Enumeration of unimodular lattices}, in J.H.\ Conway and N.~J.~A.\ Sloane, Sphere packings, lattices and groups, 3rd.~ed., Grundlehren Math.\ Wiss., vol.~290, Springer-Verlag, New York, 1999, 406--420.

\bibitem{canonical}
Mathieu Dutour Sikiri\'{c}, Anna Haensch, John Voight, and Wessel P.J. van Woerden, \emph{A canonical form for positive definite matrices}, Proceedings of the Fourteenth Algorithmic Number Theory Symposium (ANTS-XIV), ed.\ Steven Galbraith, Open Book Series, vol.~4, Mathematical Sciences Publishers, Berkeley, 2020, 179--195.

\bibitem{dprt}
Neil Dummigan, Ariel Pacetti, Gustavo Rama, and Gonzalo Tornar\'ia, \emph{Quinary forms and paramodular forms}, 2021, \texttt{arXiv:2112.03797}.

\bibitem{Eichler}
Martin Eichler, \emph{Die \"Ahnlichkeitsklassen indefiniter Gitter}, Math.\ Z.\ \textbf{55} (1952), 216--252. 

\bibitem{Eichler:book}
Martin Eichler, \emph{Quadratische Formen und orthogonale Gruppen}, Die Grundlehren der mathematischen Wissenschaften in Einzeldarstellungen mit besonderer Ber\"ucksichtigung der Anwendungsgebiete, Band LXIII, Springer-Verlag, Berlin-G\"ottingen-Heidelberg, 1952.

\bibitem{EK}
Noam Elkies and Abhinav Kumar, \emph{K3 surfaces and equations for Hilbert modular surfaces} Algebra Number Theory \textbf{8} (2014), no.\ 10, 2297--2411. 

\bibitem{Freitag}
E.\ Freitag, \emph{Siegelsche Modulfunktionen}, Grundlehren Math.\ Wiss., vol.~254. Springer-Verlag, Berlin, 1983.

\bibitem{GV}
Matthew Greenberg and John Voight, \emph{Lattice methods for algebraic modular forms on classical groups}, Computations with modular forms, eds.\ Gebhard Boeckle and Gabor Wiese, Contrib.\ Math.\ Comput.\ Sci., vol.~6, Springer, Berlin, 2014, 147--179.

\bibitem{Gross1}
Benedict Gross, \emph{Algebraic modular forms}, Israel J.\ Math.\ \textbf{113} (1999), 61--93.

\bibitem{HR}
Ishay Haviv and Oded Regev, \emph{On the lattice isomorphism problem}, Proceedings of the Twenty-Fifth Annual ACM-SIAM Symposium on Discrete Algorithms, ACM, New York, 2014, 391--404.

\bibitem{Hein}
Jeffery Hein, \emph{Orthogonal modular forms: an application to a conjecture of Birch, algorithms and computations}, Ph.D.\ thesis, Dartmouth College, 2016.

\bibitem{HeinTornariaVoight}
Jeffery Hein, Gonzalo Tornar\'ia, and John Voight, Computing Hilbert modular forms as orthogonal modular forms,
2022, unpublished.

\bibitem{Hoffmann}
Detlev W.\ Hoffmann, \emph{On positive definite Hermitian forms}, Manuscripta Math.\ \textbf{71} (1991), no.~4, 399--429.

\bibitem{HJ}
J.~S. Hsia and M.~J\"ochner, \emph{Almost strong approximations for definite quadratic spaces}, Invent.\ Math.\ \textbf{129} (1997), no.~3, 471--487. 

\bibitem{Ibu19}
Tomoyoshi Ibukiyama, \emph{Quinary lattices and binary quaternion hermitian lattices}, Tohoku Math.\ J.\ (2)\
\textbf{71} (2019), no.~2, 207--220.

\bibitem{Kirschmer-exceptional}
Markus Kirschmer, \emph{One-class genera of exceptional groups over number fields}, J.\ Th\'eor.\ Nombres Bordeaux \textbf{30} (2018), no.~3, 847--857. 

\bibitem{Kneser5}
Martin Kneser, \emph{Klassenzahlen indefiniter quadratischer Formen in drei oder mehr Ver\"anderlichen}, Arch.\ Math.\ (Basel) \textbf{7} (1956), 323--332. 

\bibitem{Kneser}
Martin Kneser, \emph{Klassenzahlen definiter quadratischer Formen}, Arch.\ Math.\ \textbf{8} (1957), 241--250. 

\bibitem{Kneser-strong}
Martin Kneser, \emph{Starke Approximation in algebraischen Gruppen.~I}, J.\ reine angew.\ Math.\ \textbf{218} (1965), 190--203.

\bibitem{LP}
Joshua Lansky and David Pollack, \emph{Hecke algebras and automorphic forms}, Compositio Math.\ \textbf{130} (2002), no.~1, 21--48.

\bibitem{LPS}
A.\ Lubotzky, R.\ Phillips, and P.\ Sarnak, \emph{Ramanujan graphs}, Combinatorica \textbf{8} (1988),
261--277.

\bibitem{LMFDB}
The LMFDB Collaboration, \emph{The $L$-functions and modular forms database}, \url{https://www.lmfdb.org}, 2023.

\bibitem{M}
G.\ Margulis, \emph{Explicit group theoretic constructions of combinatorial schemes and their
application to the design of expanders and concentrators}, J.\ Prob.\ of Info.\ Trans.\ (1988)
39--46.

\bibitem{MM}
Lisa Marquand and Stevell Muller, \emph{Classification of symplectic birational involutions of manifolds of $OG10$ type}, 2022, preprint, \texttt{arXiv:2206.13814}.

\bibitem{Megarbane}
Thomas M\'egarban\'e, \emph{Calcul des op\'erateurs de Hecke sur les classes d'isomorphisme de r\'eseaux pairs de d\'eterminant $2$ en dimension $23$ et $25$}, J.\ Number Theory \textbf{186} (2018), 370--416.

\bibitem{Milnor}
J.~Milnor, \emph{Eigenvalues of the Laplace operator on certain manifolds}, Proc.\ Nat.\ Acad.\ Sci.\
U.S.A. \textbf{51} (1964), 542.

\bibitem{Mordell}
L.J.~Mordell, \emph{The definite quadratic forms in eight variables with determinant unity}, J.\  Math.\ Pures Appl.\ \defi{17} (1938), 41--46.

\bibitem{NV}
Gabriele Nebe and Boris Venkov, \emph{On Siegel modular forms of weight 12}, J.\ reine angew.\ Math.\ \textbf{351} (2001), 49--60.

\bibitem{Niemeier}
Hans-Volker Niemeier, \emph{Definite quadratische Formen der Dimension 24 und Diskriminante 1}, J.\ Number Theory \textbf{5} (1973), 142--178.

\bibitem{Nikulin}
V.~V.\ Nikulin, \emph{Integral symmetric bilinear forms and some of their applications}, Math.\ 
USSR, Izv.\ \textbf{14} (1980), 103--167.

\bibitem{OM}
O.\ Timothy O'Meara, \emph{Introduction to quadratic forms}, Berlin, Springer-Verlag, 1971.

\bibitem{PS}
W.~Plesken and B.~Souvignier, \emph{Computing isometries of lattices}, Computational algebra and number theory (London, 1993), J.\ Symbolic Comput.\ \textbf{24} (1997), no.~3--4, 327--334.  

\bibitem{Ponomarev}
Paul Ponomarev, \emph{Arithmetic of quaternary quadratic forms}, Acta Arith.\ \textbf{29} (1976), no.~1, 1--48.

\bibitem{Ponomarev:ternary}
Paul Ponomarev, \emph{Ternary quadratic forms and Shimura's correspondence}, Nagoya Math.\ J.\ \textbf{81} (1981), 123--151.

\bibitem{Rallis}
Stephen Rallis, \emph{Langlands' functoriality and the Weil representation}, Amer.\ J.\ Math.\ \textbf{104} (1982), no.~3, 469--515.

\bibitem{Sage}
SageMath, The Sage Mathematics Software System (Version 9.8), Sage Developers, 2023, \url{https://www.sagemath.org}.
  
\bibitem{Scharlau-extremal}
Rudolf Scharlau, \emph{The extremal lattice of dimension 14, level 7 and its genus}, Diophantine methods, lattices, and arithmetic theory of quadratic forms, Contemp.\ Math., vol.~587, Amer.\ Math.\ Soc., Providence, RI, 2013, 167--183.   

\bibitem{Scha2009}
Rudolf Scharlau, \emph{Martin Kneser's work on quadratic forms and algebraic groups}, Quadratic forms: Algebra, arithmetic, and geometry, Contemp.\ Math., vol.~493, Amer.\ Math.\ Soc., Providence, RI, 339--357.

\bibitem{SchHem}
Rudolf Scharlau and Boris Hemkemeier, \emph{Classification of integral lattices with large class number}, Math.\ Comp.\ \textbf{67} (1998), no.\ 222, 737--749.

\bibitem{SSP}
Rudolf Scharlau and Rainer Schulze-Pillot, \emph{Extremal lattices}, Algorithmic algebra and number theory (Heidelberg, 1997), Springer, Berlin, 1999, 139--170.

\bibitem{Schiemann}
Alexander Schiemann, \emph{Classification of Hermitian forms with the neighbour method}, J.\ Symbolic Comput.\ \textbf{26} (1998), no.~4, 487--508.
  
\bibitem{Schonnen}
Sebastian Sch\"onnenbeck, \emph{Simultaneous computation of Hecke operators}, J.\ Algebra \textbf{501} (2018), 571--597.   

\bibitem{SP86}
Rainer Schulze-Pillot, \emph{Ternary quadratic forms and Brandt matrices}, Nagoya Math.\ J.\ \textbf{102} (1986), 117--126. 

\bibitem{SP}
Rainer Schulze-Pillot, \emph{An algorithm for computing genera of ternary and quaternary quadratic forms}, Proceedings of the 1991 international symposium on Symbolic and algebraic computation (ISSAC '91), Association for Computing Machinery, New York, 1991, 134--143. 

\bibitem{Serre}
Jean-Pierre Serre, \emph{A course in arithmetic}, Grad.\ Texts in Math., vol.~7, Springer-Verlag, New York-Heidelberg, 1973. 

\bibitem{Venkov}
B.~B.~Venkov, \emph{Odd unimodular lattices}, J.\ Soviet Math. \textbf{17} (1981), 1967--1974.  Translated from \emph{Odd unimodular lattices} (Russian), Algebraic numbers and finite groups,
Zap.\ Nau\v{c}n.\ Sem.\ Leningrad.\ Otdel.\ Mat.\ Inst.\ Steklov.\ (LOMI) \textbf{86} (1979), 40--48, 189. 

\bibitem{Venkov-cs}
B.~B.\ Venkov, \emph{Even unimodular 24-dimensional lattices}, in J.H.\ Conway and N.~J.~A.\ Sloane, Sphere packings, lattices and groups, 3rd.~ed., Grundlehren Math.\ Wiss., vol.~290, Springer-Verlag, New York, 1999, 429--440

\bibitem{Voight}
John Voight, \emph{Quaternion algebras}, Grad.\ Texts in Math., vol.~288, Springer, Cham, 2021. 

\bibitem{Witt}
Ernst Witt, \emph{Eine Identit\"at zwischen Modulformen zweiten Grades}, Abh.\ Math.\ Sem.\ Hansischen
Univ.\ \textbf{14} (1941), 323--337.

\bibitem{X}
Xun Yu, \emph{K3 surface entropy and automorphism groups}, 2022, preprint, \texttt{arXiv:2211.07526}.

\bibitem{Yoshida}
Hiroyuki Yoshida, \emph{On Siegel modular forms obtained from theta series}, J.\ Reine Angew.\ Math.\ \textbf{352} (1984), 184--219. 

\end{thebibliography}
\end{document}